\nonstopmode

\documentclass[11pt]{article}

\usepackage{geometry}
 \geometry{
 a4paper,
 total={210mm,297mm},
 left=23mm,
 right=23mm,
 top=14mm,
 bottom=18mm,
 }

\usepackage{amsmath,amssymb,amsthm}
\usepackage{graphics,epsfig,calc}
\usepackage{amsmath}
\usepackage{latexsym,epsfig,bm,amssymb}
\usepackage{color}
\usepackage{amsthm,mathrsfs}

\usepackage{mathptmx}

\DeclareSymbolFont{AMSb}{U}{msb}{m}{n}
\DeclareSymbolFontAlphabet{\mathbb}{AMSb}

\newcommand{\beqn}{\begin{eqnarray}}
\newcommand{\eeqn}{\end{eqnarray}}
\newcommand{\be}{\begin{equation}}
\newcommand{\ee}{\end{equation}}
\newcommand{\ba}{\begin{array}}
\newcommand{\ea}{\end{array}}
\newcommand{\bo}{{\hfill\loota}}
\newcommand{\loota}{\hbox{\enspace{\vrule height 7pt depth 0pt width 7pt}}}

\newcommand{\cC}{{\cal C}}
\newcommand{\cD}{{\cal D}}

\newcommand{\cF}{{\cal F}}
\newcommand{\cH}{{\cal H}}

\newcommand{\cM}{{\cal M}}
\newcommand{\cO}{{\cal O}}

\newcommand{\cR}{{\cal R}}
\newcommand{\cS}{{\cal S}}

\newcommand{\cV}{{\cal V}}
\newcommand{\cW}{{\cal W}}

\newcommand{\cX}{{\cal X}}

\newcommand{\al}{\alpha}

\newcommand{\ci}{\cite}
\newcommand{\de}{\delta}
\newcommand{\De}{\Delta}
\newcommand{\ds}{\displaystyle}
\newcommand{\fr}{\frac}
\newcommand{\ga}{\gamma}

\newcommand{\la}{\label}

\newcommand{\Lam}{\Lambda}
\newcommand{\na}{\nabla}
\newcommand{\Om}{\Omega}
\newcommand{\om}{\omega}

\newcommand{\ov}{\overline}

\newcommand{\pa}{\partial}
\newcommand{\re}{\ref}

\newcommand{\Si}{\Sigma}
\newcommand{\si}{\sigma}
\newcommand{\ti}{\tilde}
\newcommand{\dist}{\rm dist\5}
\newcommand{\ve}{\varepsilon}

\newcommand\C{{\mathbb C}}
\newcommand\R{{\mathbb R}}

\newcommand\T{{\mathbb T}}
\newcommand\Z{{\mathbb Z}}

\newcommand{\Ga}{\Gamma}

\newcommand{\vka}{\varkappa}

\newcommand{\cm}{{\rm m}}
\newcommand\nab{{\bf \nabla}}

\newcommand{\5}{{\hspace{0.5mm}}}

\newcommand{\3}{{\hspace{0.2mm}}}

\newcommand{\const}{\mathop{\rm const}\nolimits}

\newcommand{\supp}{\mathop{\rm supp}\nolimits}

\newcommand{\spec}{\mathop{\rm Spec\,}\nolimits}
\newcommand{\rRe}{{\rm Re\5}}
\newcommand{\rIm}{{\rm Im\5}}

\renewcommand{\Pr}{\hspace{-6mm}{\bf Proof~}}
\newcommand{\mes}{{\rm mes~}}

\newcommand{\Ran}{{\rm Ran\3}}

\newcommand{\toYs}{\stackrel{\cX_\al}{-\!\!-\!\!\!\longrightarrow}}

\renewcommand{\theequation}{\thesection.\arabic{equation}}
\newtheorem{theorem}{Theorem}[section]
\renewcommand{\thetheorem}{\arabic{section}.\arabic{theorem}}
\newtheorem{definition}[theorem]{Definition}

\newtheorem{lemma}[theorem]{Lemma}
\newtheorem{example}[theorem]{Example}
\newtheorem{remark}[theorem]{Remark}
\newtheorem{remarks}[theorem]{Remarks}
\newtheorem{cor}[theorem]{Corollary}
\newtheorem{proposition}[theorem]{Proposition}

\newcommand{\bd}{\begin{definition}}
\newcommand{\ed}{\end{definition}}
\newcommand{\bt}{\begin{theorem}}
\newcommand{\et}{\end{theorem}}
\newcommand{\bqt}{\begin{qtheorem}}
\newcommand{\eqt}{\end{qtheorem}}

\newcommand{\bp}{\begin{proposition}}
\newcommand{\ep}{\end{proposition}}

\newcommand{\bl}{\begin{lemma}}
\newcommand{\el}{\end{lemma}}
\newcommand{\bc}{\begin{cor}}
\newcommand{\ec}{\end{cor}}

\newcommand{\bex}{\begin{example}}
\newcommand{\eex}{\end{example}}
\newcommand{\bexs}{\begin{examples}}
\newcommand{\eexs}{\end{examples}}

\newcommand{\bexe}{\begin{exercice}}
\newcommand{\eexe}{\end{exercice}}

\newcommand{\br}{\begin{remark} }
\newcommand{\er}{\end{remark}}
\newcommand{\brs}{\begin{remarks}}
\newcommand{\ers}{\end{remarks}}

\begin{document}

\begin{titlepage}

\begin{center}
{\Large\bf On the dispersion decay  for crystals in 
\medskip\\
the linearized Schr\"odinger--Poisson model
}
\end{center}
\bigskip\bigskip

\begin{center}
{\large A. Komech}
\footnote{
The research was carried out at the IITP RAS at the expense of the Russian Foundation for Sciences (project 14-50-00150).}
\\
{\it Faculty of Mathematics of Vienna University\\
and Institute for Information Transmission Problems RAS } \\
e-mail:~alexander.komech@univie.ac.at
\bigskip\\
{\large E. Kopylova}
\footnote{
The research was carried out at the IITP RAS at the expense of the Russian Foundation for Sciences (project 14-50-00150).}
\\
{\it Faculty of Mathematics of Vienna University\\
and Institute for Information Transmission Problems RAS} \\
e-mail:~elena.kopylova@univie.ac.at
\end{center}
\vspace{1cm}

\begin{abstract}
The 
Schr\"odinger--Poisson--Newton  equations for crystals
with a~cubic lattice and one ion per cell are considered.
The ion charge density  is assumed  i) to satisfy the Wiener and Jellium conditions
 introduced in our previous paper \ci{KKpl2015}, and
 ii) to be exponentially decaying at infinity.
The corresponding examples are given.

 We study the linearized dynamics at the ground state.
The dispersion relations are introduced via
spectral resolution for the non-selfadjoint Hamilton generator 
using the positivity of the energy  established in \ci{KKpl2015}.

Our main result is 
 the dispersion decay
in the weighted Sobolev norms
for solutions with initial states from the space of  continuous spectrum of the Hamilton generator.
We also prove
the absence of singular spectrum and 
limiting absorption principle.
The multiplicity of every eigenvalue is shown to be infinite.

The proofs rely on novel exact bounds and compactness
for the inversion of the Bloch generators and on 
uniform asymptotics for the dispersion relations.
We derive the bounds 
by the energy positivity from \ci{KKpl2015}.
We also use the  theory of analytic sets.

\end{abstract}

{\bf Key words and phrases:}
crystal; lattice; field; Schr\"odinger--Poisson  equations;
linearization;
 Hamilton equation; ground state;  
positivity; eigenvalue; bifurcation;
Bloch transform;   spectral resolution;
dispersion relation; dispersion decay;
limiting absorption principle; discrete spectrum; singular spectrum.

{\bf AMS subject classification:} 35L10, 34L25, 47A40, 81U05

\end{titlepage}

\section{Introduction}
First mathematical results on the stability of matter were obtained
by Dyson and Lenard
in \ci{D1967, DL1968}, where the energy bound from below
was established.
The thermodynamic limit
for the Coulomb systems
was first studied by Lebowitz and Lieb
\ci{LL1969,LL1973}, see the survey and further development in \ci{LS2010}.
These results were extended  by Catto, Le Bris, Lions,
and others to the Thomas--Fermi and Hartree--Fock models
\ci{CBL1998,CBL2001,CBL2002}.
Further results in this direction are due to  Canc\'es,  Lahbabi, Lewin, Sabin, Stoltz, and others
 \ci{CLL2013,CS2012, BL2005, LS2014-1, LS2014-2}.
All these results concern  either
the convergence of
the ground state of finite particle systems
in the thermodynamic limit or
the existence of the ground state
for infinite particle systems.
Canc\`es and Le Bris proved the well-posedness
for the Hartree--Fock molecular system \ci{CB1999}.

However, no attention was paid to
the dynamical stability of crystals
with moving ions.
This stability is
necessary for a rigorous analysis
of fundamental quantum phenomena in the solid state physics:
heat conductivity, electric conductivity, thermoelectronic emission, photoelectric effect,
Compton effect,
etc., see \ci{BLR}.
\medskip

We consider
the simplest Schr\"o\-din\-ger--Poisson model for the crystal
with one ion per cell.
The electron cloud is described by  the one-particle Schr\"odinger equation;
 the ions are looked upon as  particles
that correspond to the Born and Oppenheimer  approximation.
The ions interact with the electron cloud via
the scalar potential, which  is a~solution to the corresponding Poisson equation.

This model does not respect the Pauli exclusion principle for electrons.
Nevertheless, it provides a convenient framework
to introduce suitable functional tools that might be instrumental 
for physically more realistic models (the Thomas--Fermi, Hartree--Fock, and second quantized models).
\medskip

The  Schr\"o\-din\-ger--Poisson equations are extensively studied for 
molecular systems
since P.-L. Lions paper
\ci{Lions1981}, see \ci{A2008, Catto2013, Nier1993} 
and the references therein. 
In present paper, we establish
the dispersion decay  for the linearized Schr\"o\-din\-ger--Poisson equations at
the ground state for infinite crystals.
The decay is proved
under the Jellium and Wiener conditions on the ion charge density introduced in \ci{KKpl2015}.
The ground state for this model was constructed in \ci{K2014}.
\medskip

We denote by
$\sigma(x)$ the charge density of one ion, and define $Z>0$ by the identity 
\be\la{ro+}
\int_{\R^3} \sigma(x)\,dx=eZ>0,
\ee
where $e>0$ is the elementary charge.
We assume througout the paper that
\be\la{L123i}
(\De-1)\si\in L^1(\R^3)
\ee
which provides a suitable decay for the Fourier transform of $\si$.
In particular, the  series (\re{W1}) are converging.
Moreover, we  assume
 the exponential decay of the ion charge density
\be\la{rd}
 |\si(x)|\le Ce^{-\ve|x|},~~~~x\in\R^3,
\ee
where $\ve>0$.
The cubic
lattice  $\Ga= \Z^3$ is chosen for the simplicity of notations.
Let $\psi(x,t)$ be the wave function of the electron field,
$q(n,t)$ denote the ions displacements,
and
$\Phi(x,t)$ be the electrostatic  potential generated by the ions and electrons.
We assume that $\hbar=c=\cm=1$, where $c$ is the speed of light and $\cm$ is the electron mass.
The coupled Schr\"odinger--Poisson--Newton equations  read as
\beqn\la{LPS1}
i\dot\psi(x,t)\!\!&=&\!\!-\fr12\De\psi(x,t)-e\Phi(x,t)\psi(x,t),\qquad x\in\R^3,
\\
\nonumber\\
-\De\Phi(x,t)\!\!&=&\!\!\rho(x,t):=\sum_{n\in\Z^3}\sigma(x-n-q(n,t))-e|\psi(x,t)|^2,\qquad x\in\R^3,
\la{LPS2}
\\
\nonumber\\
M\ddot q(n,t)
\!\!&=&\!\!-\langle\nab\Phi(x,t),\sigma(x-n-q(n,t))\rangle,
\qquad n\in\Z^3.
\la{LPS3}
\eeqn
Here $t\in\R$, the
brackets
 stand for the Hermitian scalar product on the real  Hilbert
space $L^2(\R^3)$ and for its various extensions, the  series (\re{LPS2}) converges in
a suitable sense, and $M>0$ is the mass of one ion.
All the derivatives here and below are understood in the sense of distributions.
The potential $\Phi(x,t)$ can be eliminated using the operator $G:=(-\De)^{-1}$ defined by
\be\la{GG}
G\rho(x):=\ds\fr1{4\pi}\int\fr{\rho(y)dy}{|x-y|},\qquad x\in\R^3.
\ee
Then equations (\re{LPS1})--(\re{LPS3}) can be  formally written as a~Hamilton system
with the Hamilton  functional (energy)
\be\la{Hfor}
 E(\psi,q,p)=\fr12\int_{\R^3}[|\na\psi(x)|^2+\rho(x)G\rho(x)]dx+\sum_{n} \fr{p^2(n)}{2M},
\ee
where
$ q:=(q(n): ~n\in\Z^3)$, $p:=(p(n): ~n\in\Z^3)$,  $\rho(x)$ is defined
similarly to (\re{LPS2}).
Namely, system (\re{LPS1})--(\re{LPS3}) can
be formally written as
\be\la{HSi}
i\dot \psi(x,t)=\pa_{\ov \psi(x)}\cH,
~~~
\dot q(n,t)=\pa_{p(n)}\cH,
~~~
\dot p(n,t)=-\!\pa_{q(n)}\cH,
\ee
where $\pa_{\ov z}:=\fr12(\pa_{z_1}+i\pa_{z_2})$ with $z_1=\rRe z$ and $z_2=\rIm z$.
\medskip

A ground state of a crystal is a $\Ga$-periodic  solution
\be\la{gri}
\psi^0(x)e^{-i\om^0 t}~,~~~ \Phi^0(x)~,~~~~q^0(n)=q^0~~{\rm and}~~p^0(n)=0~~~~
{\rm for}~~ n\in\Z^3
\ee
with a real $\om^0$ and minimal energy per cell.
Such ground state
was constructed in \ci{K2014} for general lattice with several ions per cell.
In our case
the ion position  $q^0\in\R^3$ can be chosen arbitrarily, and we set
$q^0=0$ everywhere below.
In our  framework $\psi^0(x)$ will be  a real function up to a phase factor, see (\re{gri}) below.
This factor can be  neglected due to $U(1)$-symmetry of equations
(\re{LPS1})--(\re{LPS3}),
and respectively, we will consider the real ground states $\psi^0(x)$.
\medskip

In present paper, we prove
the dispersion decay for  the
{\it formal linearization}
of the nonlinear  system (\re{LPS1})--(\re{LPS3})
at the ground state (\re{gri}).
The linearization is obtained on substituting
$
 \psi(x,t)=[\psi^0(x)+\Psi(x,t)]e^{-i\om^0 t}
$
into the nonlinear equations (\re{LPS1}), (\re{LPS3}) with $\Phi(x,t)=G\rho(x,t)$ and retaining the
linear terms in $Y(t)=(\Psi(\cdot,t),q(\cdot,t),p(\cdot,t))$.
For the real ground state
the linearized equation reads as follows (see \ci[(1.14)]{KKpl2015}):
\be\la{JDi}
\dot Y(t)=AY(t),\qquad
 A=\left(\ba{ccrl}
 0   &  H^0 &  0  & 0\medskip\\
-H^0-2e^2\psi^0G\psi^0 &   0  & -S  & 0\\
        0              &   0  &  0  & M^{-1}\\
     -2S^{\5*}         &   0  & -T  &  0\\
\ea\right).
\ee
Here, we denote
$Y(t)=(\Psi_1(\cdot,t),\Psi_2(\cdot,t),q(\cdot,t),p(\cdot,t))$,
where
$\Psi_1(x,t):=\rRe\Psi(x,t),\Psi_2(x,t):=\rIm \Psi(x,t)$,
$H^0:=-\fr12\De-e\Phi^0(x)-\om^0$,
the operators $S$ and $T$
are defined in Appendix,
and
$\psi^0$
denotes the operators of  multiplication
by the real function $\psi^0(x)$.
From the Hamilton representation (\re{HSi}) we have 
\be\la{AJB}
A=JB,\qquad B=
\left(\ba{cccl}
 2H^0+4e^2\psi^0 G\psi^0 & 0 & 2S & 0
 \medskip\\
 0 & 2H^0  &0 & 0\medskip\\
 2S^{\5*}  &    0  &   T    & 0  \\
      0      &    0            &   0    &  M^{-1} \\
\ea\right),\qquad
J=\left(\ba{cccc}
0  & \fr12 &  0 & 0\\
-\fr12 & 0 &  0 & 0\\
0 & 0 &  0 & 1\\
0 & 0 & -1 & 0
\ea\right).
\ee
 The results \ci{KKpl2015} imply that the energy operator $B$ is densely defined 
and is selfadjoint on the Hilbert space
\be\la{cX0}
\cX^0(\R^3):=L^2(\R^3)\oplus L^2(\R^3)\oplus l^2(\Z^3)\oplus l^2(\Z^3).
\ee
Our main goal is  to show the dispersion decay of solutions to (\re{JDi})
in the weighted norms
\be\la{cXw}
\Vert X\Vert_\al:=\Vert \langle x\rangle^\al\,\, \Psi_1(x)\Vert_{L^2(\R^3)}+
\Vert \langle x\rangle^\al \,\, \Psi_2(x)\Vert_{L^2(\R^3)}+
\Vert \langle n\rangle^\al \, q(n)\Vert_{l^2(\Z^3)}+
\Vert \langle n\rangle^\al \, p(n)\Vert_{l^2(\Z^3)}
\ee
with $\al<0$ for $X=(\Psi_1,\Psi_2,q,p)\in\cX^0(\R^3)$.
We will prove the decay
under two following  conditions  C1 and C2  on $\si$ introduced in \ci{KKpl2015}.
\be\la{W1}
{\rm C1.}~\mbox{\bf The Wiener Condition:}~~  \Si(\theta):=\sum_m\Big[
 \fr{\xi\otimes\xi}{|\xi|^2}|\hat\si(\xi)|^2\Big]_{\xi=2\pi m+\theta}>0~,
 \quad\mbox{for a.e.}\,\, \theta\in \Pi^*\setminus\Ga^*.~
\ee
Here $\hat\si(\xi)$ stands for the Fourier transform $\ds\int e^{i\xi x}\si(x)\,dx$,
$\Pi^*:=[0,2\pi]^3$ is the Brillouin zone,
$\Ga^*:=2\pi\Z^3$\, and   $\xi\otimes\xi$ denotes the matrix $\xi_i\xi_j$.
The series of the matrices converges by (\re{L123i}), and the sum is a positive definite matrix.

This condition is
 an analog of the Fermi Golden Rule for crystals. It means a strong coupling 
 of the ions to the electron field.
 
 \be\la{Wai}
{\rm C2.}  ~~~~~~~\mbox{\bf The Jellium Condition:}~~~~~~~~~~~~ \hat\si(2\pi m)=0,\quad m\in\Z^3\setminus 0.
~~~~~~~~~~~~~~~~~~~~~~~~~~~~~~~~~~~~~~~~~~~~~~~~~~~~~~~~~
\ee
This condition  cancels the negative
energy which is provided by the electrostatic instability
 (`Earnshaw's Theorem' \ci{Stratton},
see \ci[Remark 10.2]{KKpl2015}).
It implies that
the periodized ions charge density
corresponding to the ground state
is a positive constant everywhere in the space.
 
 The simplest example of such a 
$\sigma$ is a constant over the unit cell of a given lattice, which is what physicists 
usually call Jellium \cite{GV2005}. Moreover,
this condition  holds 
for a broad class of  functions  $\si$,
see  \ci[Section B.2]{KK2017a}. 
Here we study this model in the rigorous context of the Schr\"odinger--Poisson equations.

Under condition (\re{Wai}),
 the minimum of energy  per  cell
corresponds to the opposite uniform negative
 electronic charge. So
these ion and electronic densities cancel each other,
and the potential $\Phi(x,t)$ vanishes by (\re{LPS2}), see Lemma 2.1 in \ci{KKpl2015}:
\be\la{ppo}
\psi^0(x)\equiv e^{i\phi }\sqrt{Z},\quad\phi \in [0, 2\pi];\quad
\Phi^0(x)\equiv 0,\quad\om^0=0.
\ee
We give examples satisfying all our
conditions  (\re{L123i}),  (\re{rd}),  (\re{W1}), and (\re{Wai}), (see Example \re{ex2}).
\medskip

The key result of \ci{KKpl2015} is the positivity $B>0$ of the energy operator (\re{AJB}) under the Wiener and Jellium conditions C1 and C2. 
Denote by $\cW$  the completion of the space
$\cX^1(\R^3):=H^1(\R^3)\oplus H^1(\R^3)\oplus l^2(\Z^3)\oplus l^2(\Z^3)$ with the norm
\be\la{cW}
\Vert Y\Vert_\cW:= \Vert\Lam Y\Vert_{\cX^0(\R^3)},\qquad \Lam:= B^{1/2}>0.
\ee
In  \ci{KKpl2015} we have proved 
 that for any $Y(0)\in\cW$ there exists a unique  {\it weak solution}  $Y(t)\in C(\R,\cW)$ to (\re{JDi}).
 The main result of the present paper is the following  theorem.

\bt\la{tmg}
 Let  conditions \eqref{L123i},  \eqref{rd},  \eqref{W1}, and \eqref{Wai}  hold.
Then every solution $Y(t)\in C(\R,\cW)$ to (\re{JDi}) splits as follows
\be\la{FY53}
 Y(t)=\sum_1^M Y_k e^{-i\om^*_k t}+Y_c(t),
\ee
where   $M\le\infty$ and  $Y_k\in\cW$. Moreover,
\be\la{Minf}
|\om^*_k|\to\infty,\qquad k\to\infty,
\ee
if $M=\infty$. The remainder $Y_c(t)$ decays in the weighted norms: for any $\al<-3/2$,
\be\la{FY5}
\Vert \Lam Y_c(t)\Vert_\al\to 0,~~~~~~|t|\to\infty.
\ee
\et
This theorem  means the linear  asymptotics stability of the ground state (\re{ppo})
when $M=0$.
\medskip

Let us comment on our approach.
We   develop our methods \ci{KKpl2015}
relying on      the Bloch transform.
Namely, the generator  $A$
commutes with  translations by vectors from $\Ga$.
Hence, the  equation
\eqref{JDi} can be reduced using the
Fourier--Bloch--\allowbreak Gelfand--\allowbreak Zak
transform
$Y(t)\mapsto\ti Y(\cdot,t)\in L^2(\Pi^*, \cX^0(\T))$, 
where
$\T:=\R^3/\Ga$ is the periodic cell, and
\be\la{cX03}
\cX^s(\T):= H^s(\T)\oplus H^s(\T)\oplus \C^3\oplus \C^3,\qquad s\in\R.
\ee
In the Bloch transform equation  (\re{JDi}) formally reads
\be\la{Ble}
\dot{\ti Y}(\theta,t)=\ti A(\theta)\ti Y(\theta,t)\quad\mbox{\rm for \,\,a.e.}\,\,\theta\in\Pi^*,
\qquad t\in\R,
\ee
where $\ti Y(\cdot,t)\in \cX^0(\T)$
(see \ci[(8.6)]{KKpl2015}).
The Hamilton representation (\re{AJB}) implies that
\be\la{tiAJB}
 \ti A(\theta)=J\ti B(\theta),\qquad \theta\in\Pi^*\setminus \Ga^*,
 \ee
  where the Bloch energy operators $\ti B(\theta)$
are selfadjoint in  $\cX^0(\T)$.
The main crux here is that
the generator  $\ti A(\theta)$ is not selfadjoint 
and even is not symmetric in the Hilbert space $\cX^0(\T)$.
Hence we cannot diagonalize it
using the von Neumann spectral theorem.
Thus, even an introduction of the `dispersion relations' $\om_k(\theta)$, which are the eigenvalues of $\ti A(\theta)$, 
is a nontrivial problem in our situation.
Let us denote
\be\la{PI+}
\Pi^*_+:=\{ \theta\in \Pi^*\setminus \Ga^*:\Si(\theta)>0 \}. 
\ee
This is an open set of
the complete Lebesgue measure, i.e.,  $\mes(\Pi^*\setminus\Pi^*_+ )=0$, by (\re{W1}).
The key role in our approach is played by the positivity
\be\la{Hpos2}
\langle \ti Y, \ti B(\theta)\ti Y\rangle\ge \vka(\theta)\Vert \ti Y\Vert_{{\cX^1}(\T)}^2,
\quad\ti Y\in{\cX^1}(\T),\quad \theta\in\Pi^*_+,
\ee
with $\vka(\theta)>0$,  the brackets  denoting the scalar product in ${\cX^0}(\T)$.
We
have proved this bound in \ci{KKpl2015}
under   conditions  (\re{W1})--(\re{Wai})
on the ion charge density $\si$.

In present paper we
use this positivity
to show
that the eigenvectors of $\ti A(\theta)$ span the Hilbert space
$\cX^0(\T)$ by
our spectral theory of the Hamilton operators with positive energy \ci{KK2014a,KK2014b}.
This is a
special version of the  Gohberg--Krein--Langer theory of
selfadjoint operators in the Hilbert spaces with indefinite metric
 \ci[Ch.~VI]{GK} and \ci{KL1963,L1981}.
 Namely, setting $\ti\Lam(\theta):=\ti B^{1/2}(\theta)$, we obtain that
 \be\la{AK2}
 \ti A(\theta)=-i\ti \Lam^{-1}(\theta) \ti K(\theta)\ti \Lam(\theta),\qquad\theta\in\Pi^*_+,
  \ee
  where  $\ti K(\theta)=\ti\Lam(\theta) iJ\ti\Lam(\theta)$
   is a selfadjoint operator  in    ${\cX^0}(\T)$.
  Hence,
  all solutions to (\re{Ble}) admit the representation
\be\la{solexp}
 \ti Y(\theta, t)=\ti\Lam^{-1}(\theta) e^{-i \ti K(\theta)t}
\ti\Lam(\theta)\ti Y(\theta,0),\quad t\in\R,
\quad \theta\in\Pi^*_+.
\ee
We prove that the spectrum of $\ti K(\theta)$ is discrete and obtain the lower estimate for the eigenvalues
 $\om_k(\theta)$ which are 
 also the eigenvalues of  $\ti A(\theta)$
and are called 
 the dispersion relations (or the 
Floquet eigenvalues).

\medskip

Further, we represent
the solution $Y(t)$ as the inversion of the Bloch transform  (\re{solexp}).
This inversion is the series of  oscillatory  integrals
with the phase functions $\om_k(\theta)$.
Using the decay (\re{rd}) we  show that
\medskip\\
i)
$\om_k(\theta)$ are piecewise real-analytic in $\theta\in\Pi^*\setminus \Ga^*$
for every $k$;
\medskip\\
ii)
If $\om_k(\theta)\not\equiv\const$, then the set
\be\la{deg}
\{\theta\in\Pi^*\setminus \Ga^*: \na\om_k(\theta)= 0,\,\,
\det\,{\rm Hess}\,\om_k(\theta)= 0\}
\ee
has the Lebesgue measure zero;
\medskip\\
iii) In the case $M=\infty$
the limit (\re{Minf}) holds
for the constant dispersion relations $\om_k(\theta)\equiv \om^*_k$.
\medskip

These properties of the phase functions provide
the asymptotics  (\re{FY53}) and  (\re{FY5}).
Finally, we establish the
absence  of singular spectrum and
the limiting absorption principle for the selfadjoint operator $K:=i\Lam A\Lam^{-1}$.
\medskip

Note that
all our methods and results extend obviously to
 equations (\re{LPS1})--(\re{LPS3})  in the case of a~general lattice
\begin{equation}\la{gG}
\Ga=\{n_1a_1+n_2a_2+n_3a_3: (n_1,n_2,n_3)\in\Z^3\},
\end{equation}
where the generators $a_k\in\R^3$ are linearly independent. In this case
 the condition \eqref{Wai} becomes
 \begin{equation}\la{Waig}
\hat\si(\ga^*)=0,\quad \ga^*\in\Ga^*\setminus 0.
\end{equation}
Here $\Ga^*$ denotes the dual lattice
$\Ga^*=\{m_1b_1+m_2b_2+m_3b_3: (m_1,m_2,m_3)\in\Z^3\}$,  where $\langle a_k,b_j\rangle=2\pi\de_{kj}$.
The condition \eqref{Waig}
relates the properties
of the ions with the  crystal geometry.
\medskip

Let us comment on previous results in these directions.
\medskip

The  Schr\"odinger--Poisson equations for  crystal were 
introduced  in 
\ci{K2014,K2015}, where the existence  of the ground states was 
estabilshed for infinite crystals.
Recently we  have proved the linear stability of these ground states 
\ci{KKpl2015}.
In \ci{KK2017a,KK2017b} we have proved 
 the orbital stability of the ground states for  the Schr\"odinger--Poisson equations with one-particle and many-particle 
   Schr\"odinger equation in the case of
 finite crystals with periodic boundary conditions.

In the Hartree--Fock model
the crystal ground state
 was constructed for the first time by Catto, Le Bris, and  Lions  \ci{CBL2001,CBL2002}.
For the Thomas--Fermi model, see \ci{CBL1998}.

In \ci{CS2012}, Canc\'es and Stoltz established the well-posedness  for
the dynamics of
local perturbations of the  ground state density matrix
in the  {\it random phase approximation}
for the reduced  Hartree--Fock equations
with the Coulomb  pairwise interaction potential $w(x-y)=1/|x-y|$.
However, the  space-periodic nuclear potential
in the equation \ci[(3)]{CS2012}
does not depend on time that corresponds to
the fixed nuclei positions.

The nonlinear Hartree--Fock dynamics
with the Coulomb potential and
without the  random phase approximation
was not studied previously,
see the discussion in
\ci{BL2005} and in the introductions of \ci{CLL2013,CS2012}.

In \ci{CLL2013}
E. Canc\`es, S. Lahbabi, and M. Lewin considered the random
reduced HF model of crystal  when
the ions charge density and the electron density matrix are random processes,
and the action of the lattice translations on the probability space is ergodic.
They obtained suitable generalizations of the Hoffmann--Ostenhof
and Lieb--Thirring inequalities  for ergodic density matrices,
and
constructed a random potential satisfying the Poisson equation
with the corresponding stationary stochastic  charge density.
The main result 
of \ci{CLL2013}
is the  coincidence of this model with the thermodynamic limit in
the case of the short-range Yukawa interaction.

In \ci{LS2014-1}, Lewin and Sabin showed the well-posedness for the
reduced von Neumann equation, describing the Fermi gas,
with density matrices of infinite trace
and pairwise interaction potentials $w\in L^1(\R^3)$. Moreover, they
proved the asymptotic stability of translation-invariant stationary states
for 2D Fermi gas \ci{LS2014-2}.

The traditional {\it one-electron} Bethe--Bloch--Sommerfeld
mathematical model of crystals is known to be  the linear
Schr\"o\-din\-ger
equation with a
space-periodic static potential, which corresponds to the standing ions.
The corresponding spectral theory
is well developed, see \ci{RS4} and the references therein.
The scattering theory for short-range and long-range perturbations of
such `periodic operators' was  constructed in \ci{GN1,GN2}.
\medskip

The first results on the dispersion decay
$\sim t^{-1}$
were obtained by Firsova \ci{Fir1996}
for 1D  Schr\"odinger equation with space-periodic
potential  
for finite band case. The proofs
 rely on Korotyaev's results \ci{Kor1991}
on stationary
points of the dispersion relations.

The decay
$\sim t^{-\ve}$ with a small $\ve>0$
for the 1D Schr\"odinger equation
with an infinite band potential
was  established by Cuccagna
\ci{Cuc2008s}.
This decay was applied to the asymptotic  stability of standing waves
in presence of
small nonlinear perturbations \ci{Cuc2006}.

The absense of constant  dispersion relations
for the periodic
Schr\"odinger equations was  established
by Thomas \ci{Thom1973}, see also Lemma~2~(c) of  \ci{RS4}, p.~308.

Recently Prill \ci{Prill2014} proved the decay $\sim t^{-p}$ with  $p=3/2$ and $p=1/2$
(under distinct assumptions)
for
the 1D Klein-Gordon equation  with
a periodic Lam\'e potential and its short range perturbations.

The dispersion decay
for the periodic Schr\"odinger and Klein-Gordon  equations
in higher dimensions $n\ge 2$ was not obtained
previously.
\smallskip

Our paper is organized as follows.
In Section 2 we recall some formulas from~\ci{KKpl2015} for
the Bloch representation.
In Section 3 we introduce the dispersion relations and prove their properties.
In Section 4 we prove the asymptotics (\re{FY53}), (\re{FY5}), and
in Section 5 we justify the limiting absorption principle.
In Appendix A we collect some formulas from \ci{KKpl2015}
which we need in our calculations.
\bigskip

{\bf Acknowledgments} The authors are grateful to Herbert Spohn for discussions and remarks.


\setcounter{equation}{0}
\section{The Bloch representation of the dynamics}
In this section we recall some notations from
 \ci{KKpl2015}  
and establish novel exact bounds and compactness
for the inversion of the Bloch generators.

\subsection{The Bloch representation of the dynamics}

We set $\cS_+:= \cup_{\ve>0} \cS_\ve$, where $\cS_\ve$  is the space of functions
$\Psi\in\cS(\R^3)$ whose Fourier transforms  $\hat\Psi(\xi)$ vanish in the $\ve$-neighborhood of the lattice $\Ga^*$,
and let  $l_c$ be  the space of compactly supported sequences $q(n)\in \R^3$.

\begin{definition}\la{dD}
Let 
$\cD:=\{Y=(\Psi_1,\Psi_2,q,p):  \Psi_1,\Psi_2\in \cS_+,~~~ q, p\in l_c \}$.
\end{definition}

Note that the space $\cD$  is dense in $\cX^0(\R^3)$, and
$A:\cD\to \cX^0(\R^3)$ by Theorem 4.2 of \ci{KKpl2015} . Denote by $\Pi$  the primitive cell
$\Pi:=[0,1]^3$ and $\cX^0(\Pi):=L^2(\Pi)\oplus L^2(\Pi)\oplus\R^3\oplus\R^3$.
For
$  Y=(\Psi_1, \Psi_2, q, p)\in \cX^0(\R^3)$  and $n\in\Z^3$ we set 
\be\la{Yn}
Y(n)=(\Psi_1(n,\cdot),\Psi_2(n,\cdot), q(n),p(n)),\quad\mbox{\rm where}\quad
\Psi_j(n,y)=
\Psi_j(n+y)\,\,\,\,\mbox{\rm for a.e.}\,\, y\in \Pi.
\ee
Obviously,
\be\la{Yn2}
\Vert Y\Vert_{\cX^0(\R^3)}^2=\sum_{n\in\Z}\Vert Y(n)\Vert_{\cX^0(\Pi)}^2.
\ee
\bd\la{dcr}
We will call the sequence $Y(n)$ as the cell representation of $Y\in \cX^0(\R^3)$.
\ed
The ground state (\re{gri}) is invariant with respect to translations of the lattice $\Ga$,
and hence the generator  $A$ commutes with these translations.
Therefore, the operator
$A$ can be reduced using the discrete Fourier transform
\be\la{F}
 \hat Y(\theta)=F_{n\to \theta}Y(n):=\sum\limits_{n\in \Z^3}e^{in\theta}Y(n)
 =(\hat{\Psi}_1(\theta,\cdot),\hat{\Psi}_2(\theta,\cdot),\hat {q}(\theta),\hat{p}(\theta))
 \quad~~{\rm for \ a.e.}~~\theta\in \Pi^*,
\ee
By the Parseval-Plancherel theorem
the series converge in $L^2(\Pi^*,\cX^0(\Pi))$  for $ Y\in \cX^0(\R^3)$.

\begin{definition}\la{FGLZ}
(see  \ci{DK2005,PST,RS4}, and
 \ci{KKpl2015})
The Bloch transform of $Y\in \cX^0(\R^3)$ is defined as
\be\la{YPi}
 \ti Y(\theta)=[\cF Y](\theta):= \cM(\theta)\hat Y(\theta):
 =(\ti{\Psi}_1(\theta,\cdot),\ti{\Psi}_2(\theta,\cdot),\hat{ q}(\theta),\hat{ p}(\theta))
 \qquad~~{\rm for \ a.e.}~~\theta\in\Pi^*,
\ee
where $\ti{\Psi}_j(\theta,y)=M(\theta)\hat{\Psi}_j:=e^{i\theta y}\hat{\Psi}_j(\theta,y)$
are  $\Ga$-periodic functions in $y\in\R^3$.
\end{definition}
The transform
$\cF:\cX^0(\R^3)\to  L^2(\Pi^*,\cX^0(\T))$ is an isomorphism
by the Parseval-Plancherel identity.
The inversion is given by
\be\la{FZI}
Y(n)=|\Pi^*|^{-1}\int_{\Pi^*}e^{-in\theta}\cM(-\theta)\ti Y(\theta)d\theta,\qquad n\in\Z^3.
\ee
 In the Bloch transform one has
$
\widetilde{AY}(\theta)=
\ti A(\theta) \ti Y(\theta)$ for   $Y\in\cD$ and  $\theta\in  \Pi^*\setminus\Ga^*$.
Here $\ti A(\theta)$ denotes the operator matrix
\be\la{tiA}
\ti A(\theta)=\!\!\left(\!\!\!
 \ba{cccl}
0 &\!\!\ti H^0(\theta) & 0  & 0\medskip\\
 -\ti H^0(\theta)-2e^2\psi^0\ti G(\theta)\psi^0&
0 &~\ti S(\theta) & 0\\
 0  & 0&\!0&\!M^{-1}\\
 -2\ti S^{\5*}(\theta)\!\!&  0 &-\ti T(\theta)&\!0\\
\ea\!\!\!\!\right),\qquad\theta\in\Pi^*\setminus\Ga^*,
\ee
where the operator entries are given by \eqref{tiHS}--\eqref{tiH1}.
The operator  $\ti A(\theta)$ admits the representation
\be\la{hess2i}
\ti A(\theta)=J\ti B(\theta),\,\,\, \ti B(\theta) =\!\left(\!\ba{cccl}
 2\ti H^0(\theta)+4e^2\psi^0 \ti G(\theta)\psi^0&  0 & 2\ti S(\theta)&0
 \medskip\\
 0 &2\ti H^0(\theta)&  0 &0
\medskip\\
 2\ti S^{\5*}(\theta) &   0
 &   \ti T(\theta)             & 0  \\
      0    &    0            &   0 &  M^{-1} \\
\ea\!\right)\!\!,\,\,\,\,\,\,\,
\theta\in\Pi^*
\setminus\Ga^*,
\ee
\begin{lemma}\la{lB}
Let conditions \eqref{L123i} and    \eqref{W1},  \eqref{Wai} hold. Then
\medskip\\
i) For $\theta\in\Pi^*\setminus\Ga^*$ the operator  $\ti B(\theta)$ is selfadjoint
in $\cX^0(\T)$
with the domain ${\cX^2}(\T)$;  the quadratic  form
$\langle\ti B(\theta) \ti Y, \ti Y\rangle$ extends by continuity to $\ti Y\in \cX^1(\T)$:
\be\la{qfB}
\langle\ti B(\theta) \ti Y, \ti Y\rangle\le C\Vert \ti Y\Vert_{\cX^1(\T)}^2,\qquad \ti Y\in \cX^1(\T).
\ee
ii) For $\theta\in\Pi^*_+$
\be\la{qfB2}
\Vert \ti Y\Vert_{\cX^1(\T)}^2\le \fr1{\vka(\theta)}\langle\ti B(\theta) \ti Y, \ti Y\rangle,
\qquad \ti Y\in \cX^1(\T).
\ee
\end{lemma}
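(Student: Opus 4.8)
The plan is to realize the Bloch fiber $\ti A(\theta)$'s symmetric partner $\ti B(\theta)$, for a fixed $\theta\in\Pi^*\setminus\Ga^*$, as a bounded self-adjoint perturbation of its diagonal part; from that decomposition the upper form bound \eqref{qfB} is immediate, while the lower bound \eqref{qfB2} is a mere restatement of the positivity \eqref{Hpos2} already established in \ci{KKpl2015}. The only place where the restriction $\theta\notin\Ga^*$ genuinely matters is the boundedness of the entries $\ti G(\theta)$ and $\ti S(\theta)$ appearing in \eqref{hess2i}.

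For (i) I would proceed as follows. Under conditions \eqref{W1}--\eqref{Wai} the ground state degenerates as in \eqref{ppo}, so $\Phi^0\equiv 0$ and $\om^0=0$, and therefore the entry $\ti H^0(\theta)$ (the Bloch fiber of $H^0=-\fr12\De-e\Phi^0-\om^0$, see the Appendix) is simply the Bloch fiber of $-\fr12\De$: a nonnegative constant-coefficient elliptic operator of order two on the cell $\T$, self-adjoint on $H^2(\T)$, with form domain $H^1(\T)$ and $0\le\langle\ti H^0(\theta)f,f\rangle\le C(\theta)\Vert f\Vert^2_{H^1(\T)}$. The remaining entries of $\ti B(\theta)$ in \eqref{hess2i} are all bounded on the relevant components of $\cX^0(\T)$ as soon as $\theta\notin\Ga^*$: the term $4e^2\psi^0\ti G(\theta)\psi^0=4e^2Z\,\ti G(\theta)$ because the Bloch symbol $|2\pi m+\theta|^{-2}$ of $\ti G(\theta)$ is $\le{\rm dist}(\theta,\Ga^*)^{-2}$; the coupling $\ti S(\theta)\colon\C^3\to L^2(\T)$ because it has rank $\le 3$ and its $L^2(\T)$-profiles (the Bloch transform at $\theta$ of $G\na\si$) have squared norm the series $\sum_m|2\pi m+\theta|^{-2}|\hat\si(2\pi m+\theta)|^2$, which converges because of the decay $|\hat\si(\xi)|\le C(1+|\xi|^2)^{-1}$ furnished by \eqref{L123i} — hence $\ti S(\theta)$ and $\ti S^*(\theta)$ are bounded; and $\ti T(\theta)$ is the bounded Hermitian matrix $\Si(\theta)$ from \eqref{W1}, $M^{-1}$ a positive scalar. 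Thus $\ti B(\theta)=\ti B_d(\theta)+\ti R(\theta)$ with $\ti B_d(\theta)={\rm diag}(2\ti H^0(\theta),2\ti H^0(\theta),0,0)$ self-adjoint on $H^2(\T)\oplus H^2(\T)\oplus\C^3\oplus\C^3=\cX^2(\T)$ and $\ti R(\theta)=\ti R(\theta)^*\in\mathcal B(\cX^0(\T))$ (symmetry of $\ti R(\theta)$ being inherited from that of $\ti B(\theta)$, the Bloch fiber of the Hessian $B$), so the Kato--Rellich theorem gives self-adjointness of $\ti B(\theta)$ on $\cX^2(\T)$; alternatively one may deduce this from self-adjointness of the decomposable operator $B=\int^\oplus\ti B(\theta)\,d\theta$ on $\cX^0(\R^3)$ (Theorem 4.2 of \ci{KKpl2015}). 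For \eqref{qfB}, on $\cX^2(\T)$ one has $\langle\ti B(\theta)\ti Y,\ti Y\rangle=2\langle\ti H^0(\theta)\Psi_1,\Psi_1\rangle+2\langle\ti H^0(\theta)\Psi_2,\Psi_2\rangle+\langle\ti R(\theta)\ti Y,\ti Y\rangle\le C\Vert\ti Y\Vert^2_{\cX^1(\T)}$ by the $\ti H^0$-bound together with $|\langle\ti R(\theta)\ti Y,\ti Y\rangle|\le\Vert\ti R(\theta)\Vert\,\Vert\ti Y\Vert^2_{\cX^0(\T)}$; since $\cX^2(\T)$ is dense in $\cX^1(\T)$, the form extends by continuity with the same bound.

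For (ii) there is nothing to prove beyond invoking \ci{KKpl2015}: for $\theta\in\Pi^*_+$ the positivity \eqref{Hpos2} says $\langle\ti Y,\ti B(\theta)\ti Y\rangle\ge\vka(\theta)\Vert\ti Y\Vert^2_{\cX^1(\T)}$ with $\vka(\theta)>0$, and dividing by $\vka(\theta)$ yields \eqref{qfB2}. The genuinely hard input, the lower bound \eqref{Hpos2} itself, is thus imported, and everything else in this lemma is bookkeeping. The one point requiring care — and the motivation for the sharper, $\theta$-explicit estimates and the compactness announced for the sequel — is that the bounds on $\ti G(\theta)$ and $\ti S(\theta)$ fail at $\theta\in\Ga^*$ and degenerate as $\theta\to\Ga^*$ (for instance $\Vert\ti S(\theta)\Vert\sim|\theta|^{-1}$ near $\theta=0$), so neither the constant $C$ in \eqref{qfB} nor $\vka(\theta)^{-1}$ in \eqref{qfB2} is uniform in $\theta$.
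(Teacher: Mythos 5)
Your proof is correct and follows essentially the same route as the paper: split $\ti B(\theta)$ into the diagonal part built from $\ti H^0(\theta)$ (selfadjoint on $H^2(\T)$, form-bounded by the $H^1$-norm) plus a bounded symmetric remainder whose entries $\ti G(\theta)$, $\ti S(\theta)$, $\ti T(\theta)$ are controlled via \eqref{L123i} for $\theta\notin\Ga^*$, conclude selfadjointness on $\cX^2(\T)$ and the upper form bound \eqref{qfB}, and obtain \eqref{qfB2} directly from the positivity \eqref{Hpos2} of \ci{KKpl2015}. You merely spell out the entrywise estimates that the paper leaves implicit, so no further comment is needed.
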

\begin{proof}
i) The representation
$\ti A(\theta)=J\ti B(\theta)$
 follows from  (\re{AJB}).
The operator $\ti B(\theta) $
is symmetric on the domain  $\cX^2(\T)$.
Moreover,  all operators in (\re{hess2i}), except for $\ti H^0(\theta)$, are bounded by (\re{L123i}).
Finally,  $\ti H^0(\theta)$
is  selfadjoint in $L^2(\T)$ with the domain $H^2(\T)$.
Hence, $\ti B(\theta)$ is selfadjoint on  the domain $\cX^2(\T)$.
\medskip\\
ii) The bound (\re{qfB2}) holds by (\re{Hpos2}).
\end{proof}
\bc\la{lLam}
Under conditions
 \eqref{L123i} and    \eqref{W1},  \eqref{Wai}
\medskip\\
i) The operator $\ti\Lam(\theta):=\ti B^{1/2}(\theta):\cX^1(\T)\to\cX^0(\T) $ is bounded.
\medskip\\
ii)
For $\theta\in\Pi^*_+$
the operator $\ti\Lam(\theta)$ is invertible in $\cX^0(\T)$. Moreover,
\be\la{qfB22}
\Vert
\ti \Lam^{-1}(\theta) \ti Z \Vert_{\cX^1(\T)}^2
\le \fr1{\vka(\theta)}\Vert \ti Z\Vert_{\cX^0(\T)}^2,\qquad \ti Z\in \cX^0(\T)
\ee
\ec
\begin{proof}
i) $\ti\Lam(\theta)$ is bounded 
by (\re{qfB}), since
\be\la{sin}
\langle\ti\Lam(\theta) \ti Y,  \ti\Lam(\theta)  \ti Y\rangle=
\langle\ti B(\theta) \ti Y, \ti Y\rangle,\qquad \ti Y\in \cX^2(\T).
\ee
ii) $\ti\Lam(\theta)$ is invertible by the positivity  (\re{Hpos2}),
and
(\re{qfB22}) follows
by (\re{qfB2}) as applied to $\ti Y=\ti \Lam^{-1}(\theta) \ti Z$.
\end{proof}

\subsection{Reduction to selfadjoint generator}\la{sred}


\bd\la{dws}
A function $Y(t)\in C(\R,\cX^1(\R^3))$
is
a weak solution to (\re{JDi})  if, for every $V\in\cD$,
\be\la{ws}
\langle Y(t)-Y(0),V\rangle= \int_0^t\langle Y(s),A^*V \rangle ds,\qquad t\in\R.
\ee
\ed
In the Bloch transform the weak solution satisfies (\re{Ble})
in the sense of $\cX^1(\T)$-valued distributions, see \ci{KKpl2015}.
Applying $\ti\Lam(\theta)$ to both sides of
 \eqref{Ble}, we obtain the equivalent equation
 $ \dot {\ti Z}(\theta,t)=-i\ti K(\theta)\ti Z(\theta,t)$ for $\theta\in \Pi^*_+$
in the sense of vector-valued distributions,
  where ${\ti Z}(\theta,t):={\ti \Lam}(\theta){\ti Y}(\theta,t)$ and
  $\ti K(\theta):=\ti\Lam(\theta) iJ\ti\Lam(\theta)$ is formally symmetric operator in $\cX^0(\T)$.
The main  crux  is that the domain of $\ti K(\theta)$ is unknown, since the ion density
$\si(x)$ is not smooth in general, and so the PDO machinery does not apply.
The following lemma plays a~key role in our approach.

\begin{lemma}\la{lH0}
(cf. Lemma 8.2 of \ci{KKpl2015})
Let conditions
 \eqref{L123i} and    \eqref{W1},  \eqref{Wai} hold. Then
for $\theta\in\Pi^*_+$
\medskip\\
i) $\ti K(\theta)$ is a selfadjoint operator in ${\cX^0}(\T)$ with dense domain  $D(\ti K(\theta))\subset {\cX^1}(\T)$;
\medskip\\
ii) $\ti K^{-1}(\theta)$ is a
compact selfadjoint operator in  ${\cX^0}(\T)$, and
\be\la{qfB23}
\Vert
\ti K^{-1}(\theta) \ti Z \Vert_{\cX^1(\T)}^2
\le \fr C{\vka(\theta)}\Vert \ti Z\Vert_{\cX^0(\T)}^2,\qquad \ti Z\in \cX^0(\T).
\ee

\end{lemma}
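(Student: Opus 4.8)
The plan is to exploit the factorization $\ti K(\theta)=\ti\Lam(\theta)\,iJ\,\ti\Lam(\theta)$ together with the two bounds already established for $\ti\Lam(\theta)$ in Corollary~\ref{lLam}: namely that $\ti\Lam(\theta):\cX^1(\T)\to\cX^0(\T)$ is bounded, and that for $\theta\in\Pi^*_+$ its inverse $\ti\Lam^{-1}(\theta):\cX^0(\T)\to\cX^1(\T)$ is bounded with norm controlled by $\vka(\theta)^{-1/2}$. Since $iJ$ is a bounded selfadjoint (indeed unitary up to a scalar, as $J^2$ is a negative scalar block matrix) operator on $\cX^0(\T)$, the composition $\ti K(\theta)$ maps $\cX^2(\T)$ into $\cX^0(\T)$ and is symmetric there; the first task is to promote this to essential selfadjointness. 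For this I would not try to identify the domain directly — as the remark before the lemma stresses, the PDO machinery fails because $\si$ is not smooth — but instead argue via the resolvent. Concretely, set $\ti R(\theta):=\ti\Lam^{-1}(\theta)(iJ)^{-1}\ti\Lam^{-1}(\theta)$, which is a bounded operator on $\cX^0(\T)$ by Corollary~\ref{lLam}, and check that it is a two-sided inverse of $\ti K(\theta)$ on the natural domain $\ti\Lam^{-1}(\theta)(iJ)^{-1}\ti\Lam^{-1}(\theta)\cX^0(\T)$. Because $\ti R(\theta)$ is bounded and symmetric (using $J^*=-J$ and $\ti\Lam(\theta)^*=\ti\Lam(\theta)$), it is selfadjoint on all of $\cX^0(\T)$; hence $\ti K(\theta)=\ti R(\theta)^{-1}$ is selfadjoint, and its domain is $\Ran \ti R(\theta)\subset\Ran\ti\Lam^{-1}(\theta)=\cX^1(\T)$, giving (i) with the asserted domain inclusion. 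That $\ti K(\theta)$ is invertible (so that $\ti R(\theta)$ really is $\ti K^{-1}(\theta)$) follows from $0\notin\spec iJ$ and the invertibility of $\ti\Lam(\theta)$.

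For part (ii) the compactness is the crux. The identity $\ti K^{-1}(\theta)=\ti R(\theta)=\ti\Lam^{-1}(\theta)(iJ)^{-1}\ti\Lam^{-1}(\theta)$ exhibits $\ti K^{-1}(\theta)$ as a composition in which the outer factor $\ti\Lam^{-1}(\theta)$ maps $\cX^0(\T)$ boundedly into $\cX^1(\T)$. So it suffices to observe that the embedding $\cX^1(\T)\hookrightarrow\cX^0(\T)$ is compact: indeed $\cX^1(\T)=H^1(\T)\oplus H^1(\T)\oplus\C^3\oplus\C^3$ and $\cX^0(\T)=L^2(\T)\oplus L^2(\T)\oplus\C^3\oplus\C^3$, the first two summands embedding compactly by the Rellich theorem on the compact manifold $\T=\R^3/\Ga$ and the finite-dimensional summands being the identity, hence compact trivially. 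Writing $\ti K^{-1}(\theta)=\iota\circ\big[\ti\Lam^{-1}(\theta)(iJ)^{-1}\ti\Lam^{-1}(\theta)\big]$ where $\iota:\cX^1(\T)\hookrightarrow\cX^0(\T)$ is this compact embedding and the bracketed operator is bounded from $\cX^0(\T)$ to $\cX^1(\T)$, compactness of $\ti K^{-1}(\theta)$ on $\cX^0(\T)$ follows. Selfadjointness of $\ti K^{-1}(\theta)$ was already obtained above.

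The quantitative bound \eqref{qfB23} is then immediate bookkeeping: for $\ti Z\in\cX^0(\T)$ put $\ti W:=(iJ)^{-1}\ti\Lam^{-1}(\theta)\ti Z$, so that $\ti K^{-1}(\theta)\ti Z=\ti\Lam^{-1}(\theta)\ti W$. Applying \eqref{qfB22} gives $\Vert\ti K^{-1}(\theta)\ti Z\Vert_{\cX^1(\T)}^2\le\vka(\theta)^{-1}\Vert\ti W\Vert_{\cX^0(\T)}^2$, and since $(iJ)^{-1}$ is bounded on $\cX^0(\T)$ (with norm depending only on the fixed matrix $J$, absorbed into $C$) while $\ti\Lam^{-1}(\theta)$ satisfies $\Vert\ti\Lam^{-1}(\theta)\ti Z\Vert_{\cX^1(\T)}^2\le\vka(\theta)^{-1}\Vert\ti Z\Vert_{\cX^0(\T)}^2$ — hence a fortiori $\Vert\ti\Lam^{-1}(\theta)\ti Z\Vert_{\cX^0(\T)}^2\le\fr{C'}{\vka(\theta)}\Vert\ti Z\Vert_{\cX^0(\T)}^2$ using also that $\cX^1(\T)\hookrightarrow\cX^0(\T)$ is norm-decreasing up to a constant — one gets $\Vert\ti W\Vert_{\cX^0(\T)}^2\le\fr{C''}{\vka(\theta)}\Vert\ti Z\Vert_{\cX^0(\T)}^2$, and combining the two estimates yields \eqref{qfB23} with $C$ independent of $\theta$.

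The step I expect to be the main obstacle is establishing the selfadjointness in (i) cleanly without ever describing $D(\ti K(\theta))$ explicitly. The delicate point is that $\ti\Lam(\theta)=\ti B^{1/2}(\theta)$ is only given as a square root, so one must be careful that $\ti K(\theta)$ as a priori defined on $\cX^2(\T)$ (where $\ti B(\theta)$ is genuinely selfadjoint) is \emph{exactly} the operator whose inverse is the bounded operator $\ti R(\theta)$, rather than merely a symmetric restriction of it; this is why I route everything through the bounded, manifestly selfadjoint resolvent $\ti R(\theta)$ and recover $\ti K(\theta)$ as $\ti R(\theta)^{-1}$, taking the domain inclusion $D(\ti K(\theta))=\Ran\ti R(\theta)\subset\cX^1(\T)$ for free from Corollary~\ref{lLam}(ii). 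Verifying symmetry of $\ti R(\theta)$ requires the self-adjointness of $\ti\Lam(\theta)$ on $\cX^0(\T)$ (which holds since $\ti\Lam(\theta)=\ti B(\theta)^{1/2}$ with $\ti B(\theta)\ge 0$ selfadjoint by Lemma~\ref{lB}) and the skew-adjointness $J^*=-J$, both of which are available. Density of $D(\ti K(\theta))$ follows since $\ti R(\theta)$ has dense range, being the inverse of the (in particular, densely defined) operator $\ti K(\theta)$ — or, more directly, since $\ti\Lam^{-1}(\theta)$ has range $\cX^1(\T)$ which is dense in $\cX^0(\T)$.
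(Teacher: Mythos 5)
Your argument is essentially the paper's own proof: the paper likewise introduces the bounded, symmetric (hence selfadjoint) operator $\ti R(\theta)=\ti K^{-1}(\theta)=i\ti\Lam^{-1}(\theta)J^{-1}\ti\Lam^{-1}(\theta)$, recovers $\ti K(\theta)$ as its densely defined selfadjoint inverse (via Theorem 13.11(b) of \cite{Rudin}) with $D(\ti K(\theta))=\Ran\5\ti R(\theta)\subset\Ran\5\ti\Lam^{-1}(\theta)\subset\cX^1(\T)$ by \eqref{qfB22}, and obtains compactness of $\ti K^{-1}(\theta)$ from \eqref{qfB22} combined with the compact embedding $\cX^1(\T)\hookrightarrow\cX^0(\T)$. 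One small caveat: chaining your two $\vka^{-1}$ estimates literally yields $C/\vka^2(\theta)$ rather than the $C/\vka(\theta)$ written in \eqref{qfB23} (the paper is equally laconic on this point), but the discrepancy is immaterial since the estimate is only used on compact subsets $Q\subset\Pi^*_+$ where $\vka(\theta)\ge b(Q)>0$, so only the constant changes.
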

\begin{proof}
{\it i)} The operator $\ti \Lam(\theta)$ is injective. Moreover, $\Ran\5\ti\Lambda(\theta)={\cX^0}(\T)$.
Hence, $\Ran\5\ti  K(\theta)={\cX^0}(\T)$. Consider the inverse operator
\begin{equation}\la{G}
 \ti R(\theta):=\ti K^{-1}(\theta)=i\ti\Lam^{-1}(\theta) J^{-1}\ti\Lam^{-1}(\theta).
\end{equation}
This operator is selfadjoint, since it is bounded and symmetric. Hence,
$D(\ti R(\theta))=\Ran\5 \ti K(\theta)={\cX^0}(\T)$.
Therefore, $\ti K(\theta)=\ti R^{-1}(\theta)$ is a densely defined selfadjoint operator
by Theorem 13.11 (b) of \cite{Rudin}:
$$
\ti K^*(\theta)=\ti K(\theta)~, \quad D(\ti K(\theta))=\Ran\5 \ti R(\theta)\subset \
\Ran \5\ti\Lam^{-1}(\theta)\subset {\cX^1}(\T),
$$
where the last inclusion follows from (\re{qfB22}).
\medskip\\
{\it ii) }  Estimate (\re{qfB23}) follows from (\re{qfB22}) and (\re{G}).
Hence, $\ti K^{-1}(\theta)$ is a
compact operator in  ${\cX^0}(\T)$ by  the Sobolev embedding theorem.
\end{proof}

As a consequence,
\be\la{ZHbi}
 \ti Z(\theta,t)=e^{-i \ti K(\theta) t}\ti Z(\theta,0),~~~~~~~
 \ti Z(\theta,0):=\ti\Lam(\theta)\ti Y(\theta,0).
\ee

The definition (\re{cW}) implies that the operator $\Lam:=\cF^{-1}\ti\Lam(\theta)$ is the isomorphism $\cW\to\cX^0(\R^3)$.

\bp\la{pd} (Corollary 8.5 of \ci{KKpl2015})
Let the positivity (\re{Hpos2}) hold. Then,
for every initial state $Y(0)\in \cW$, there exists a unique weak
 solution $Y(\cdot)\in C_b(\R,\cW)$ to equation \eqref{JDi}.
 The solution is given by formula \eqref{solexp}.

\ep




\setcounter{equation}{0}
\section{Dispersion relations}\la{dr}
  Here we establich the properties of the eigenvalues of  $\ti K(\theta)$ which play the key role in the proof of the 
   dispersion decay.
Lemma \re{lH0} implies the spectral resolution
\be\la{Hsr}
 \ti K(\theta)=\sum_{k=1}^\infty \om_k(\theta)P_k(\theta),~~~~~~\theta\in \Pi^*_+,
\ee
where  $\om_k(\theta)$
 are the eigenvalues
(dispersion relations)
counted with their multiplicities,
$$
|\om_1(\theta)|\le |\om_2(\theta)|\le\dots,
$$
and
$P_k(\theta)$ are the corresponding orthogonal projections.

\bl \la{lQ}
Let   conditions
 \eqref{L123i} and    \eqref{W1},  \eqref{Wai} hold and
$Q$ be a compact subset of $\Pi^*_+$. Then
\be\la{omk1}
|\om_k(\theta)|\ge \ve(Q)k^{2/3},\qquad k\ge 1,\qquad\theta\in Q,
\ee
where $\ve(Q)>0$.
\el
\begin{proof}
The key role in the proof of (\re{omk1}) is played by the estimate \ci[(7.23)]{KKpl2015}:
\be\la{com}
b(Q):=
\inf_{\theta\in Q}\vka(\theta)>0
\ee
for any compact subset $Q\subset\Pi^*_+$.  The  expansion (\re{Hsr})  implies that 
\be\la{Hsr2}
 |\ti K^{-1}(\theta)|=\sum_{k=1}^\infty |\om_k(\theta)|^{-1}P_k(\theta),~~~~~~\theta\in \Pi^*_+.
\ee
Moreover,  by duality we have from estimate (\re{qfB23})  
\be\la{qfB24}
\Vert
\ti K^{-1}(\theta) \ti Z \Vert_{\cX^0(\T)}^2\le \fr C{b(Q)}\Vert \ti Z\Vert_{\cX^{-1}(\T)}^2,\qquad \ti Z\in \cX^0(\T),\quad \theta\in Q
\ee
due to (\re{com}), since the operator $\ti K^{-1}(\theta)$ is selfadjoint.
At last, the norm in the right-hand side of (\ref{qfB24}) can be written
as $\Vert   g \ti Z\Vert_{\cX^{0}(\T)}$, where
\be\la{g}
g=\left(\ba{cccc}
(-\De+1)^{-1/2}&0&0&0\\
0&(-\De+1)^{-1/2}&0&0\\
0&0&1&0\\
0&0&0&1
\ea
\right)
\ee
is the positive selfadjoint operator in $\cX^{0}(\T)$. Now (\re{qfB24}) gives that
\be\la{qfB25}
\Vert
|\ti K^{-1}(\theta) | \ti Z \Vert_{\cX^0(\T)}\le C(Q)\Vert g \ti Z\Vert_{\cX^{0}(\T)},\qquad \ti Z\in \cX^0(\T),\quad \theta\in Q.
\ee
Hence,
the Rayleigh-Courant-Fisher theorem  (\ci[Theorem 1, p.110]{A}
and \ci[Theorem XIII.1, p.91]{RS4}) implies that
\be\la{RCF}
|\om_k(\theta)|^{-1}\le C(Q) g_k,\qquad k\ge 1,\quad  \theta\in Q,
\ee
 where $g_1\ge g_2\ge\dots$ are the eigenvalues of $g$ counted with their multiplicities.
 Therefore, (\re{omk1}) holds, since $g_k\le Ck^{-2/3}$.
 The last inequality is obvious, in as much as $k\le \#(n\in\Z^3:n^2+1 \le g_k^{-1})\le C_1 g_k^{-3/2}$.
  \end{proof}
Further we use the exponential decay of the ion charge density (\re{rd}).
It is easy to construct examples of densities $\si$ satisfying
 all conditions of Theorem \re{tmg}:
(\re{L123i}), (\re{rd})  and the Wiener and Jellium conditions
(\re{W1}), (\re{Wai}).
\begin{example}\la{ex2}
{\rm
 For example, all these conditions hold for
$\si(x_1,x_2,x_3):=\si_1(x_1)\si_1(x_2)\si_1(x_3)$,
where
$$
\ti\si_1(\xi):=\fr{\sin\fr\xi2}\xi e^{-\xi^2},\qquad\xi\in\R.
$$
 In particular,  (\re{rd})  holds by the Paley--Wiener theorem.
}
\end{example}

 The condition (\re{rd}) implies that
 the function  $\ti\si(\theta,y)$ is analytic with respect to~$\theta$ in the complex tube
\[
 \Pi^*_\ve:=\{\theta\in [\Pi^*\setminus\Ga^*]\oplus i\R^3:~|\rIm\theta|<\ve\}.
\]
Hence, the finite rank operators $\ti S(\theta)$
and $\ti T(\theta)$ defined in (\ref{tiHS}) -- (\ref{tiH1})
are also analytic in $\theta\in \Pi^*_\ve$. Therefore, $\ti K(\theta)$
is real-analytic on  $\Pi^*_+$.
Denote the set
\be\la{spec}
\cR:=\{(\theta,\om): \,\,\theta\in  \Pi^*_+, \,\,\om\in\spec\ti K(\theta)\}.
\ee
The eigenvalues $\om_k(\theta)$ and  the projections $P_k(\theta)$ become 
 single-valued functions on $\cR$: for $R=(\theta,\om_k(\theta))$
 \be\la{func}
 \theta(R):=\theta,\qquad
 \om(R):=\om_k(\theta),\qquad
  P(R):=P_k(\theta).
 \ee
 These functions are
  continuous on the manifold
  $\cR$ endowed with natural topology  by the incluzion $\cR\subset \Pi^*\times\R$.
 They are
  piecewise analytic on $\cR$
 by the following lemma,
 which extends \ci[Lemma 1.1]{S1989} from the Schr\"odinger equation with 
 periodic potential to the system (\re{JDi}).
\begin{lemma}\la{lom}
Let  all conditions of Theorem \re{tmg} hold.
Then
for every point $R^*=(\theta^*,\om^*)\in\cR$  there exists a neighborhood
$U=U(R^*)\subset \cR$ with its projection $V=V(R^*)$ onto $\Pi^*_+$\, ,
and  a critical subset
$\cC=\cC(R^*)\subset \Pi^*_\ve$, which is a finite union of analytic   submanifolds
of positive complex codimension in $\Pi^*_\ve$,
with the following properties:
\medskip\\
i) For any point
$R=(\theta,\om) \in U$
we have $\om(R):=\om\in \spec \ti K(\theta)$.  
\medskip\\
ii) For  any point $\theta' \in V\setminus \cC$  
there exists a neihborhood $W=W(\theta')\subset V\setminus \cC$ such that
$R=(\theta,\om) \in U$ with $\theta\in  W$ if and only if 
$\om=\om_l(\theta)$
with some $l=1,..., L=L(R^*)$. 
\medskip\\
iii)  The eigenvalues $\om_l(\cdot)$ and the corresponding projections  $P_l(\cdot)$ are  real-analytic functions  on   $W$
and admit an analytic continuation outside $\cC$ in a complex neighborhood of $\theta^*$ in $\Pi^*_\ve$.
\medskip\\
iv) For each $l=1,..., L(R^*)$, either
\be\la{Ck}
  \na \om_{l}(\theta)\ne 0,~~~~\theta\in W,
\ee
or
\be\la{omc}
  \om_{l}(\theta)\equiv \om^*,~~~~~~\theta\in W.
\ee
v) If  (\re{omc}) holds with some $l$ for a point $R^*=(\theta^*,\om^*)$, 
then 
the constant eigenvalue
also exists for
 $(\theta,\om^*)$ with any
$\theta\in\Pi^*_+$.
\end{lemma}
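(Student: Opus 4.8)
\noindent\textit{Plan of the proof.} This statement extends \ci[Lemma 1.1]{S1989} from the periodic Schr\"odinger equation to the non-self-adjoint system (\re{JDi}), and the plan is to combine Kato's analytic perturbation theory for families of closed operators with compact resolvent with the theory of analytic sets. Since $\ti K(\theta)$ itself has no obvious continuation to complex $\theta$, I would work with the family $\ti A(\theta)=J\ti B(\theta)$ of (\re{hess2i}). By (\re{ppo}) we have $\Phi^0\equiv0$, $\om^0=0$, so $\ti H^0(\theta)$, the Bloch reduction of $-\frac12\Delta$, is a polynomial of degree two in $\theta$; together with the analyticity of $\ti S(\theta),\ti T(\theta)$ on $\Pi^*_\ve$ (from the exponential decay (\re{rd})) and of $\ti G(\theta)$ on a complex neighbourhood of any $\theta^*\in\Pi^*\setminus\Ga^*$, this shows that $\ti A(\theta)$ is an analytic family of closed operators with fixed domain $\cX^2(\T)$ and compact resolvent on a complex neighbourhood $\cO=\cO(\theta^*)\subset\Pi^*_\ve$ of $\theta^*$. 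For real $\theta\in\Pi^*_+$ one has $\ti K(\theta)=i\ti\Lam(\theta)\ti A(\theta)\ti\Lam^{-1}(\theta)$ by (\re{AK2}), so $\spec\ti K(\theta)=i\spec\ti A(\theta)\subset\R$, and $\om\in\spec\ti K(\theta)$ iff $\mu:=-i\om\in\spec\ti A(\theta)$.

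Next I would fix $R^*=(\theta^*,\om^*)\in\cR$ and $\mu^*:=-i\om^*$, and choose a small circle $\gamma_0$ around $\mu^*$ meeting no other point of $\spec\ti A(\theta^*)$. Shrinking $\cO$, the Riesz projection $\hat P(\theta)=\frac1{2\pi i}\oint_{\gamma_0}(z-\ti A(\theta))^{-1}dz$ is analytic on $\cO$ of constant finite rank $N=N(R^*)$, and via Kato's transformation function $\ti A(\theta)\hat P(\theta)$ is realised as an $N\times N$ matrix analytic in $\theta$, whose characteristic polynomial $p(\theta,\mu)$ is monic of degree $N$ in $\mu$ with analytic coefficients on $\cO$; its roots, with algebraic multiplicity, are exactly the eigenvalues of $\ti A(\theta)$ inside $\gamma_0$. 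Taking $\gamma_0$ so small that $\mu^*$ is the only such eigenvalue at $\theta^*$, and $\cO$ so small that all roots of $p(\theta,\cdot)$ stay inside $\gamma_0$, one gets a neighbourhood $U=U(R^*)$ of $R^*$ in $\cR$, namely the $(\theta,\om)$ with $\theta\in\cO\cap\Pi^*_+$ near $\theta^*$ and $p(\theta,-i\om)=0$; this proves (i), and every root of $p$ equals $\mu^*$ at $\theta=\theta^*$. Replacing $p$ by its radical $p_{\mathrm{red}}$ in the unique factorisation ring (convergent power series at $\theta^*$) times $\C[\mu]$ — a UFD by Gauss's lemma — keeps the zero set and makes $\mathrm{disc}_\mu p_{\mathrm{red}}\not\equiv0$, since the coefficient field has characteristic zero.

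I would then set $\cC=\cC(R^*)$ equal to the union of the hypersurface $\{\mathrm{disc}_\mu p_{\mathrm{red}}=0\}$ and, for each irreducible factor $q$ of $p_{\mathrm{red}}$ with non-constant branches, the locus $\{\theta:\ \exists\,\mu,\ q(\theta,\mu)=0=\na_\theta q(\theta,\mu)\}$; by elimination each is a proper analytic subset of $\cO$, so $\cC$ is a finite union of analytic submanifolds of positive complex codimension. For a real $\theta'\in V:=\cO\cap\Pi^*_+$ with $\theta'\notin\cC$, pick a connected neighbourhood $W=W(\theta')\subset V\setminus\cC$; over $W$ the $L:=\deg_\mu p_{\mathrm{red}}=L(R^*)$ sheets of $\{p_{\mathrm{red}}=0\}$ are single-valued analytic functions $\mu_l(\theta)$ (implicit function theorem, $\partial_\mu p_{\mathrm{red}}\ne0$ there) continuing analytically into $\cO\setminus\cC$, and $\om_l(\theta):=i\mu_l(\theta)$ is real-analytic on $W$ since $\spec\ti K(\theta)\subset\R$ — this gives (ii) and (iii), the projections being the corresponding Riesz integrals. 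Each sheet on $W$ is either constant, hence $\equiv\mu^*$ and $\om_l\equiv\om^*$ by the normalisation above, or non-constant, in which case $\na\om_l=-i\,\na_\theta p_{\mathrm{red}}/\partial_\mu p_{\mathrm{red}}$ vanishes only on $\cC$ and so $\na\om_l\ne0$ on $W$; this is (iv). For (v), assume $\om_l\equiv\om^*$ on $W$, i.e. $\mu^*\in\spec\ti A(\theta)$ for $\theta\in W$: near any $\theta_0\in\Pi^*\setminus\Ga^*$ the condition $\mu^*\in\spec\ti A(\theta)$ is the vanishing of the regularised Fredholm determinant ${\det}_2(1-(\mu^*-z_0)(\ti A(\theta)-z_0)^{-1})$ with $z_0\notin\spec\ti A(\theta_0)$ — legitimate because $(\ti A(\theta)-z_0)^{-1}$ is Hilbert--Schmidt in $\cX^0(\T)$, as $\sum_{m\in\Z^3}\langle m\rangle^{-4}<\infty$ — which is real-analytic in $\theta$ and has the same zero locus for any admissible $z_0$; hence $\{\theta\in\Pi^*\setminus\Ga^*:\mu^*\in\spec\ti A(\theta)\}$ is locally the zero set of a real-analytic function, it contains the open set $W$, and since $\Pi^*\setminus\Ga^*$ (a fundamental parallelepiped minus finitely many vertices) is connected, it equals all of $\Pi^*\setminus\Ga^*$; restricting to $\Pi^*_+$ gives (v).

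The main obstacle is the loss of self-adjointness of $\ti A(\theta)$: one cannot diagonalise, and must instead check that it is a bona fide analytic family with compact resolvent so that the Riesz projections and the polynomial $p(\theta,\mu)$ behave as in the self-adjoint case, and recover the reality of $\om_l(\theta)$ on $\Pi^*_+$ from the similarity with the self-adjoint $\ti K(\theta)$. A second, more global, difficulty is part (v), which is invisible to the local perturbation picture used for (i)--(iv) and requires a genuine analytic function detecting the constant eigenvalue together with the connectedness of $\Pi^*\setminus\Ga^*$.
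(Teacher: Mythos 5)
Your proposal is essentially sound and follows the same skeleton as the paper's proof — a finite-rank Riesz projection, reduction to a characteristic polynomial with analytic coefficients, and then the analytic-set arguments of \ci[Lemma 1.1]{S1989} for i)--iv), with a connectedness (open--closed) argument for v) — but you implement it differently at two points, and each difference is meaningful. First, you run the perturbation theory on $\ti A(\theta)=J\ti B(\theta)$ rather than on $\ti K(\theta)$ itself: the paper forms the Riesz projection (\re{Rp1}) directly from the resolvent of $\ti K(\theta)$ and asserts its analyticity in a complex neighborhood of $\theta^*$, which tacitly uses an analytic continuation of $\ti\Lam(\theta)=\ti B^{1/2}(\theta)$; your route avoids that by exploiting the explicit analyticity of the matrix entries (\re{tiHS})--(\re{tiH1}) and transferring the spectrum via the similarity (\re{AK2}), recovering reality of $\om_l$ on $\Pi^*_+$ from the selfadjointness of $\ti K(\theta)$. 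You also make explicit the algebraic steps (radical, discriminant, elimination of the critical loci) that the paper delegates to Sj\"ostrand. Second, for v) the paper continues the constant branch analytically through the connected region $\Pi^*_\ve\setminus\cC$, whereas you detect the eigenvalue $\mu^*=-i\om^*$ globally by the zero set of a regularized determinant ${\det}_2$ of the Hilbert--Schmidt resolvent and use connectedness of $\Pi^*\setminus\Ga^*$; this is a legitimate alternative, provided you also verify that the resolvent set of $\ti A(\theta)$ is nonempty at the exceptional real $\theta\notin\Pi^*_+$ (e.g.\ by a large-real-$z$ Neumann series against the skew-adjoint principal part), and that the resolvent is analytic in the Hilbert--Schmidt norm, not merely in operator norm — both are true but should be said.

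The one genuine loose end concerns the projections in part iii). The lemma asserts real-analyticity (and continuation outside $\cC$) of the $P_l(\cdot)$ appearing in the spectral resolution (\re{Hsr}), i.e.\ the orthogonal spectral projections of $\ti K(\theta)$. Your construction directly yields analyticity only of the Riesz projections of $\ti A(\theta)$; these are similar, not equal, to the $P_l(\theta)$, namely $P_l(\theta)=\ti\Lam(\theta)\hat P_l(\theta)\ti\Lam^{-1}(\theta)$, so transferring analyticity requires (real-)analytic dependence of $\ti\Lam(\theta)$ and $\ti\Lam^{-1}(\theta)$ on $\theta$ — exactly the square-root regularity your choice of $\ti A(\theta)$ was designed to bypass. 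This can be repaired, e.g.\ via $\ti B^{-1/2}(\theta)=\pi^{-1}\int_0^\infty\lambda^{-1/2}(\ti B(\theta)+\lambda)^{-1}\,d\lambda$ together with the uniform positivity (\re{Hpos2}) on compact subsets of $\Pi^*_+$, or by writing $P_l(\theta)$ as a Riesz integral of the resolvent of $\ti K(\theta)$ after establishing its analyticity by the same device; but as written, the clause ``the projections being the corresponding Riesz integrals'' does not close this step, and it is needed both for iii) and for the later use of $P_{jl}(\theta)$ in the decay argument.
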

\begin{proof}
Let us set $r:=\dist (\om^*, \spec \ti K(\theta^*)\setminus\om^*)>0$. Then
\be\la{Rp1}
 P(\theta)=-\fr1{2\pi i}
 \int_{|\om-\om^*|=r/2} [\ti K(\theta)-\om]^{-1}d\om
\ee
is a finite-rank 
Riesz 
 projection, which is analytic in a complex neighborhood of $\theta^*$.
Its range $\Ran P(\theta)$ is invariant under $\ti K(\theta)$, and hence the bifurcated  from
$\om^*$ eigenvalues of $\ti K(\theta)$ coincide with the roots of the characteristic equation
\be\la{Rp2}
 \det[M(\theta)-\om]=0,
\ee
where
$M(\theta):=\ti K(\theta)|_{\Ran P(\theta)}$.
The coefficients of this polynomial are analytic
functions of $\theta$
in a complex neighborhood of $\theta^*$, and hence
i)--iv) follow
by the arguments from the proof of  Lemma 1.1 of \ci{S1989}.

Finally,  v) follows from the fact that the set of the corresponding $\theta\in\Pi^*_+$ is closed and open at the same time by the analyticity
of each $\om_l(\theta)$ in a connected open region of  $\Pi^*_\ve\setminus\cC$.
\end{proof}

\begin{definition}\la{dOm*}
 $\Om^*$ is the set of all 
 $\om^*$ which are 
constant eigenvalues (\re{omc}) at least for one point
$R^*\in\cR$. 
\end{definition}


\setcounter{equation}{0}
\section{Dispersion decay}
Here we prove our main Theorem \re{tmg}.
Recall that
$\Lam: \cW\to\cX^1(\R^3)$  is an isomorphism by  the definition (\re{cW}), and hence,
 it suffices to check the corresponding asymptotics for $Z(t):=\Lam Y(t)\in C(\R,\cX^0(\R^3))$:
\be\la{FY52}
 Z(t)=\sum_1^M Z_k e^{-i\om^*_k t}+Z^c(t);
\qquad \Vert Z^c(t)\Vert_\al \to 0~,\quad |t|\to\infty,
\ee
where $Z_k\in\cX^0(\R^3)$ and $ \al<-3/2$.
Substituting (\re{ZHbi}) for $\ti Z(\theta,t)$
into the inversion formula  (\re{FZI})
we obtain the corresponding cell representation
\be\la{FY2}
 Z(n,t)=|\Pi^*|^{-1}\int_{\Pi^*} e^{-in\theta}\cM(-\theta)e^{-i\ti K(\theta)t}
 \ti Z(\theta,0)d\theta,\quad n\in\Z^3.
\ee
The weighted norms  (\re{cXw}) are equivalent to the modified norms 
\be\la{eqn}
|\!|\!| Z|\!|\!|_\al ^2:=\sum_{n\in\Z^3}(1+|n|)^{2\al} \Vert Z(n) \Vert_{\cX^0(\Pi)}^2,\qquad  Z\in\cX^0(\R^3),
\ee
where $Z(n)$
 are defined by (\re{Yn}). Hence, 
 the decay  (\re{FY52}) for $Z^c(t)$ is equivalent to
 \be\la{dZc}
 \sum_{n\in \Z^3}(1+|n|)^{2\al} \Vert Z^c(n,t) \Vert_{\cX^0(\Pi)}^2\to 0,
 \quad t\to\infty.
\ee
The spectral resolution
 (\re{Hsr}) implies that
\be\la{FY22}
 Z(n,t)=|\Pi^*|^{-1}\int_{\Pi^*}e^{-in\theta}\cM(-\theta)
 [\sum_k e^{-i\om_k(\theta)t}P_k(\theta)]\ti Z(\theta,0)d\theta,\qquad n\in\Z^3.
\ee
Equivalently,
\be\la{FY222}
 Z(n,t)=|\Pi^*|^{-1}\int_{\cR}e^{-in\theta}\cM(-\theta)
 e^{-i\om t} P(\theta,\omega)\ti Z(\theta,0)  d\theta,\qquad n\in\Z^3, 
\ee
where  $\theta$, $\om$ and the projection $P(\theta,\omega)$ are the single-valued continuous functions
(\re{func}) on $\cR$. We denote by $d\theta$ is the corresponding differential form on $\cR$.
The integral is well defined by Lemma \re{lom}. 
\medskip

\subsection{Discrete spectral component}  
We define the series of oscillating terms of (\re{FY52}) by its cell representation
\be\la{discr}
\sum_k Z_k (n)e^{-i\om^*_k t}=
|\Pi^*|^{-1}\int_{\{(\theta,\om)\in\cR:\om\in \Om^*\}}e^{-in\theta}\cM(-\theta)
 e^{-i\om t} P(\theta,\omega)\ti Z(\theta,0)  d\theta,\qquad n\in\Z^3.
\ee
Now  (\re{Minf}) follows from (\re{omk1}).
\medskip

\subsection{Continuous spectral component} 
It remains to prove the decay (\re{FY52}) for the remainder corresponding to the cell representation
\be\la{FY222d}
 Z^c(n,t)=|\Pi^*|^{-1}\int_{\cV}e^{-in\theta}\cM(-\theta)
 e^{-i\om t} P(\theta,\omega)\ti Z(\theta,0)  d\theta,\qquad n\in\Z^3,
\ee
where the integration  spreads over the set
$\cV:=\{(\theta,\om)\in\cR:\om\not\in \Om^*\}$.
For every $\nu>0$, we split $Z^c(t)=Z^\nu_-(t)+Z^\nu_+(t)$, where
\beqn
 Z^\nu_-(n,t)&=&|\Pi^*|^{-1}\ds\int_{\cV^\nu_-} e^{-in\theta}\cM(-\theta)
  e^{-i\om t}P(\theta,\om)\ti Z(\theta,0)d\theta,
 \la{Znu}\\
 \nonumber\\
 \la{Rnu}
  Z^\nu_+(n,t)&=&|\Pi^*|^{-1}\ds\int_{\cV^\nu_+}e^{-in\theta}\cM(-\theta)
  e^{-i\om t}P(\theta,\om)\ti Z(\theta,0)d\theta.
\eeqn
Here $\cV^\nu_-:=\{(\theta,\om)\in\cV:  |\om|\le\nu\}$ and
$\cV^\nu_+:=\{(\theta,\om)\in\cV:  |\om| >\nu\}$.
\medskip\\
  {\bf  High energy  component.}
By  (\re{Yn2}) and the
Parseval--Plancherel theorem
\be\la{Zn}
\Vert  Z^\nu_+(t)\Vert_{\cX^0(\R^3)}^2=\sum_{n\in\Z^3} \Vert  Z^\nu_+(n,t)\Vert_{\cX^0(\Pi)}^2=
 |\Pi^*|^{-1}\int_{\cV^\nu_+}
\Vert P(\theta,\om)\ti Z(\theta,0)\Vert_{\cX^0(\T)}^2 d\theta.
 \ee
 According to definition (\re{cW})
the condition $Y(0)\in\cW$ means that $Z=\Lam Y(0)\in\cX^0(\R^3)$.
Hence, the Parseval--Plancherel identity gives
\be\la{Wm}
\Vert Z(0)\Vert^2_{\cX^0(\R^3)}=|\Pi^*|^{-1} \int_{\Pi^*} \Vert\ti Z(\theta,0)\Vert_{\cX^0(\T)}^2 d\theta<\infty.
\ee
Therefore,    (\re{Zn}) implies that
\be\la{Zn2}
\Vert  Z^\nu_+(t)\Vert_{\cX^0(\R^3)}
\to 0, \qquad \nu\to\infty
\ee
uniformly in $t\in\R$
by the $\si$-additivity
since $\cap_{\nu>0} \cV_+^\nu=\emptyset $. 
\medskip\\
{\bf Low energy component.}
It remains  to prove the decay (\re{FY52}) for $Z^\nu_-(t)$ corresponding to 
the cell representation
$Z^\nu_-(n,t)$.
It suffices to check that every norm
$\Vert Z^\nu_-(n,t) \Vert_{\cX^0(\Pi)}$ decays to zero as $t\to\infty$, since $\al<-3/2$ and
\be\la{dZc2}
 \sum_{n\in \Z^3} \Vert Z^\nu_-(n,t) \Vert_{\cX^0(\Pi)}^2=\Vert  Z^\nu_-(t) \Vert_{\cX^0}^2=\const, \qquad t\in\R
 \ee
 by (\re{Yn2}) and formula of type (\re{ZHbi}) for $\ti Z^\nu_-(\theta,t)$.
 \medskip\\
{\bf Reduction to a compact set and partition of unity} 
Consider an open precompact subset $Q\subset\Pi^*_+$ such that the Lebesgue measure of
$\Pi^*_+\setminus Q$ is sufficiently small, and denote
$\hat Q^\nu:=\{R=(\theta,\om)\in\cV^\nu_-: \theta\in Q\}$.
Then
the $\cX^0(\Pi)$-norm of the integral  of type (\re{Znu})  over
$\cV^\nu_-\setminus \hat Q^\nu$ is small uniformly in $t\in\R$ by (\re{Wm}).
Hence,
it remains to prove the decay for
\be\la{Zn3}
Z^\nu_Q(n,t):=|\Pi^*|^{-1}\int_{\hat Q^\nu}
e^{-in\theta}\cM(-\theta)
  e^{-i\om t}P(\theta,\omega)\ti Z(\theta,0)d\theta.
\ee
The  asymptotics (\re{omk1}), which are
uniform in $\theta\in Q$, imply that the set $\hat Q^\nu$
is open and precompact in $\cR$. 
Neglecting an arbitrarily  small term we can assume that $Q$ does not intersect a small neighborhood of the
critical submanifold
 $\cC_j\subset V(\theta_j)$ for every $j$.
Hence, we can cover $\hat Q^\nu$
 by a finite number of neighborhoods
$W(R_j)$ from Lemma \re{lom} 
with $R_j=(\theta_j,\om_j)\in\ov{\hat Q^\nu}$.
Then there exists a 
partition of unity
$\chi_j\in C(\cR)$   with $\supp\chi_j\subset W(R_j)$:
\be\la{pu}
 \sum_j\chi_j(R)=1,\quad R=(\theta,\omega)\in \hat Q^\nu.
 \ee
 Hence, (\re{Zn3}) becomes the finite sum
\be\la{FY223}
 Z^\nu_{jl}(n,t)=
 \sum_{j,l}
 |\Pi^*|^{-1}
 \int_{W(R_j)}
 e^{-in\theta}
 \chi_j(\theta,\om_{jl}(\theta))\cM(-\theta)e^{-i\om_{jl}(\theta)t}P_{jl}(\theta)\ti Z(\theta,0)d\theta,
\ee
where the functions $\om_{jl}$ and projections $P_{jl}$ are constructed in Lemma \re{lom}. 
Note, that  all constant dispersion relations (\re{omc}) are excluded from the integration (\re{FY223}), and hence,
the remaining nonconstant dispersion relations $\om_{jl}(\theta)$ satisfy (\re{Ck}).
 Let us approximate
\medskip\\
i) $\chi_j(\theta,\om_{jl}(\theta))$ by $\chi_{jl}(\cdot)\in C_0^\infty(W(R_j))$ and
\medskip\\
ii) $P_{jl}(\theta)\ti Z(\theta,0)$
by some  functions $D_{jl}(\theta)\in C^\infty(W(R_j),\cX^0(\T))$  in the norm of $L^2(Q,\cX^0(\T))$.
\medskip\\
Then the corresponding error in (\re{FY223})  is small
in the norm $\cX^0(\Pi)$ uniformly in $n\in\Z^3$ and $t\in\R$.
Finally,  (\re{Ck}) implies by a partial integration the decay of the integrals  (\re{FY223})
with $\mu_{jl}(\theta)D_{jl}(\theta)$ instead of $\chi_j(\theta,\om_{jl}(\theta))P_{jl}(\theta)\ti Z(\theta,0)$ .
\bo

\setcounter{equation}{0}
\section{Spectral properties of the selfadjoint generator}
Here we study
spectral properties of the 
operator $K:=\cF^{-1}\ti K\cF$, where $\ti K$ denotes the operator of multiplication by $\ti K(\theta)$ 
in the Hilbert space
 $L^2(\Pi^*,\cX^0(\T))$. 
  \bl
$K$  is  a selfadjoint operator in  $\cX^0(\R^3)$ with a dense domain $D(K)$. 
 \el
 \begin{proof}
 Lemma \re{lH0} ii) implies that the operator $\ti K^{-1}$ of multiplication by $\ti K^{-1}(\theta)$ is  selfadjoint and injective in $L^2(\Pi^*,\cX^0(\T))$.
 Hence,  its inverse  $\ti K(\theta)$ is densely defined  selfadjoint operator  in $L^2(\Pi^*,\cX^0(\T))$ by Theorem 13.11 (b) of \ci{Rudin}.
 \end{proof}
\bc
The Hamilton generator $A$ from (\re{JDi})
admits the representation
\be\la{AK2f}
 AY=-i\Lam^{-1}K \Lam Y,\qquad Y\in\Lam^{-1} D(K),
  \ee
where $\Lam:=\cF^{-1}\ti\Lam(\theta):\cW\to\cX^0(\R^3)$ is the isomorphism.
\ec

By (\re{Hsr}),
\be\la{FY24}
 KZ(n)=|\Pi^*|^{-1}\int_{\Pi^*_+}e^{-in\theta}\cM(-\theta)
 \sum_k \om_k(\theta) P_k(\theta)\ti Z(\theta)d\theta,~~~~~~~n\in\Z^3
\ee
for any $Z\in \cX^0(\R^3)$.
Similarly
\be\la{FY23}
 Z(n)=|\Pi^*|^{-1}\int_{\Pi^*_+}e^{-in\theta}\cM(-\theta)
 \sum_k P_k(\theta)\ti Z(\theta)d\theta,~~~~~~~n\in\Z^3.
\ee
Therefore,
\be\la{FY25}
(K-\om)Z(n)=|\Pi^*|^{-1}\int_{\Pi^*_+}e^{-in\theta}\cM(-\theta)
\sum_k (\om_k(\theta)-\om) P_k(\theta)\ti Z(\theta)d\theta.
\ee
Hence, the discrete spectrum $\si_p(K)$ consists of constant dispersion relations.

\begin{lemma}\la{leig}
Let all conditions of Theorem \re{tmg} hold. Then
\medskip\\
i)
$\si_p(K)=\Om^*$.
\medskip\\
ii) The multiplicity of every  eigenvalue is  infinite.
\end{lemma}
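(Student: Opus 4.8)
The plan is to diagonalise $K$ through the Bloch transform. By \eqref{FY25} the transform $\cF$ is an isometric isomorphism $\cX^0(\R^3)\to L^2(\Pi^*,\cX^0(\T))$ conjugating $K-\om$ with the fibred operator of multiplication by $\ti K(\theta)-\om$, and by Lemma~\ref{lH0}(ii) each $\ti K(\theta)$ has compact resolvent, so $\spec\ti K(\theta)=\si_p(\ti K(\theta))$. Hence the first step is the standard direct-integral fact that $\om\in\si_p(K)$ if and only if $\Theta_\om:=\{\theta\in\Pi^*_+:\ \ker(\ti K(\theta)-\om)\neq\{0\}\}$ has positive Lebesgue measure: the ``only if'' part is immediate, since a nonzero eigenvector $\ti Z$ satisfies $\ti Z(\theta)\in\ker(\ti K(\theta)-\om)$ a.e.\ and must be nonzero on a set of positive measure, necessarily inside $\Pi^*_+$ up to a null set; for the ``if'' part I would build a Borel unit eigenvector field $e(\theta)\in\ker(\ti K(\theta)-\om)$ on $\Theta_\om$ --- fixing an orthonormal basis $(f_m)$ of $\cX^0(\T)$, setting $m(\theta):=\min\{m:Q(\theta)f_m\neq0\}$ and $e(\theta):=Q(\theta)f_{m(\theta)}/\Vert Q(\theta)f_{m(\theta)}\Vert$, where $Q(\theta)$ is the spectral projection of $\ti K(\theta)$ at $\om$ --- and then $\cF^{-1}(\mathbf{1}_{\Theta_\om}e(\cdot))$ is a genuine eigenvector. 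That $\theta\mapsto Q(\theta)$ is Borel follows because on each neighbourhood of Lemma~\ref{lom} it is a Riesz integral of type \eqref{Rp1} with a contour that may be chosen uniform, hence piecewise real-analytic.

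\textbf{Step 2 ($\si_p(K)\subset\Om^*$).} Suppose $\Theta_\om$ has positive measure. For a density point $\theta^*\in\Theta_\om$ lying off all the critical sets, put $R^*=(\theta^*,\om)\in\cR$ and apply Lemma~\ref{lom}; since the set $U(R^*)$ is open in $\cR$, for $\theta$ near $\theta^*$ one has $\theta\in\Theta_\om$ iff $\om=\om_l(\theta)$ for one of the finitely many analytic branches $\om_1,\dots,\om_L$. Covering $\Theta_\om$ by countably many such neighbourhoods and deleting the null union of critical sets, at least one branch $\om_l$ coincides with $\om$ on a set of positive measure inside a connected component of $V(R^*)\setminus\cC(R^*)$. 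But a real-analytic function on a connected open set is either constant or has a Lebesgue-null level set; hence $\om_l\equiv\om$ on that component, so $\om$ is a constant dispersion relation and $\om\in\Om^*$ by Definition~\ref{dOm*} (equivalently, this is the second alternative in Lemma~\ref{lom}(iv)).

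\textbf{Step 3 ($\Om^*\subset\si_p(K)$ and infinite multiplicity).} Conversely, if $\om^*\in\Om^*$, then Lemma~\ref{lom}(v) shows that $\om^*$ is a constant eigenvalue of $\ti K(\theta)$ for \emph{every} $\theta\in\Pi^*_+$, so $\Theta_{\om^*}=\Pi^*_+$ has full measure and $\om^*\in\si_p(K)$ by Step~1; this proves (i). For (ii), let $e(\theta)$ be the Borel unit eigenvector field from Step~1, defined a.e.\ on $\Pi^*_+$. For every $\vp\in L^2(\Pi^*)$ the field $\vp(\theta)e(\theta)$ belongs to $L^2(\Pi^*,\cX^0(\T))$ and is annihilated by $\ti K(\theta)-\om^*$ a.e., and $\vp\mapsto\cF^{-1}(\vp\,e(\cdot))$ is, up to the scalar $|\Pi^*|^{-1/2}$, an isometric embedding of $L^2(\Pi^*)$ into $\ker(K-\om^*)$. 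As $L^2(\Pi^*)$ is infinite-dimensional, so is $\ker(K-\om^*)$.

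I expect the main obstacle to be purely measure-theoretic bookkeeping: assembling the \emph{global} Borel measurability of $\theta\mapsto Q(\theta)$ and of the section $e(\theta)$ on all of $\Pi^*_+$ out of the local analyticity furnished by Lemma~\ref{lom}, and, in the inclusion $\si_p(K)\subset\Om^*$, passing correctly from ``positive measure in one chart'' through the layered neighbourhood structure ($V\setminus\cC$, then $W$) of that lemma. Both are routine given Lemma~\ref{lom}, and no analytic input beyond the positivity bounds already established is needed.
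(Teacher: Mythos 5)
Your proof is correct and follows essentially the same route as the paper: both rest on the fibre representation (\re{FY25}) of $K-\om$ together with Lemma \re{lom}, obtaining $\Om^*\subset\si_p(K)$ and the infinite multiplicity by multiplying an eigenvector field of the constant branch by arbitrary scalar functions of $\theta$, and the converse inclusion from the fact that a nonconstant analytic dispersion branch has a null level set. Your Step 2 simply spells out the covering and measure-theoretic details that the paper's terse converse argument (``$(K-\om^*)Z=0$ with $\ti Z(\theta^*)\ne 0$ implies (\re{omc})'') leaves implicit.
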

\begin{proof}
Let $\om^*\in\Om^*$ is a constant eigenvalue  (\re{omc}) corresponding to a point $R^*=(\theta^*,\om^*)\in\cR$.
Let us take
any    $Z\in\cX^0(\R^3)$ with the Bloch transform $\ti Z(\theta) \in\Ran P(\om^*)$ for
$\theta\in V(\theta^*)$ and $\ti Z(\theta)\equiv 0$ for $\theta\not\in V(\theta^*)$.
Then 
(\re{omc}) and (\re{FY25})  imply that $(K-\om^*)Z=0$. Obvioulsy, the space of such $Z$ is infinite dimensional.
\medskip\\
Conversely, let $(K-\om^*)Z=0$ for some $Z\in\cX^0(\R^3)$, and $\ti Z(\theta^*)\ne 0$. Then (\re{FY25}) implies (\re{omc}) with some $l=1,...,L(\theta^*,\om^*)$.
\end{proof}

Let us show   that the continuous  spectrum of $K$ is absolutely continuous.
First,
(\re{FY25}) implies that
 the resolvent  $R_K(\om):=(K-\om)^{-1}$ for $\rIm\om\ne 0$ is given by
\be\la{FY26}
R_K(\om)Z(n)=|\Pi^*|^{-1}\int_{\Pi^*_+}e^{-in\theta}\cM(-\theta)
\sum_k (\om_k(\theta)-\om)^{-1} P_k(\theta)\ti Z(\theta)d\theta,\qquad Z\in\cX^0(\R^3).
\ee
Denote by $\cX_d$ the space  of discrete spectrum of $K$.
\bl\la{lss}
Let all conditions of Theorem \re{tmg} hold. Then
the singular spectrum of $K$ is empty.
\el
\begin{proof}
This follows by  Theorem XIII.20 of  \ci{RS4}. Namely, it suffices to check
the corresponding  criterion
\be\la{crit}
\sup_{0<\ve<1}
\int_a^b|\rIm\langle Z, R_K(\om+i\ve) Z\rangle |^pd\om<\infty
\ee
with any $a,b\in\R$ and some $p>1$ for a dense set of $Z\in \cX_d^\bot$.
For example, for  the linear span of vectors  $Z\in\cX^0(\R^3)$ with the Bloch transform
\be\la{YV}
\ti Z(\theta)=P_l(\theta)D(\theta),~~~~~~~~
 D\in C_0^\infty(W, \cX^0(\T)),
\ee
as constructed in Lemma \re{lom} for each $R^*=(\theta^*,\om^*)\in\cR$,
where $P_l(\theta)$ is  the projection
corresponding to an eigenvalue $\om_l(\theta)$ satisfying   (\re{Ck}).
It suffices to check (\re{crit})
only for the vectors of type (\re{YV}). Applying Sokhotski-Plemelj's formula, we obtain  for these vectors
\beqn\la{crit2}
\rIm\langle Z, R_K(\om+i\ve) Z\rangle
&=&\,\,\,\int_W\rIm\langle P_l(\theta)D(\theta), (\om-\om_l(\theta)-i\ve)^{-1} P_l(\theta)D(\theta)\rangle_{\cX(\T)}d\theta
\nonumber\\
&\to\!\!&\!\!-\!\pi\!\!\int_{\om_l(\theta)=\om}\fr{\langle P_l(\theta)D(\theta),P_l(\theta)D(\theta)\rangle_{\cX(\T)}}
{|\na\om_l(\theta)|}d\theta,\qquad \ve\to 0+,
\eeqn
which implies (\re{crit}) with any $p\ge 1$.
\end{proof}

In concluzion, let us prove the Limiting Absorption Principle. Let us denote by $\cX_\al$ the Hilbert space
of functions with the finite norm (\re{eqn}).

\begin{lemma}\la{llap}
Let all conditions of Theorem \re{tmg} hold,
and let $Z\in\cX_d^\bot$ be
a finite linear combination of
the vectors
with the Bloch transform
of type   (\re{YV}).
Then   for any
$\om\in \R$ and
$\al<-7/2$
\be\la{lap}
R_K(\om\pm i\ve)Z\toYs R_K(\om\pm i0)Z,~~~~~~\ve\to +0.
\ee
\end{lemma}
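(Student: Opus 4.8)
The plan is to reduce the limiting absorption principle to a one–dimensional oscillatory‑integral estimate along the energy hypersurfaces $\{\om_l(\theta)=\om\}$, exactly as in the proof of Lemma \re{lss}. First I would write, using \eqref{FY26} and the spectral resolution,
\be
\langle Z, R_K(\om\pm i\ve)Z\rangle
=\sum_{l}\int_{W}\fr{\langle P_l(\theta)D(\theta),P_l(\theta)D(\theta)\rangle_{\cX^0(\T)}}{\om_l(\theta)-\om\mp i\ve}\,d\theta,
\ee
a finite sum over the pieces supplied by Lemma \re{lom}, where on each piece $\om_l(\theta)$ is real‑analytic and satisfies $\na\om_l(\theta)\ne0$ on $\supp D$ by \eqref{Ck} (the constant dispersion relations contribute only to $\cX_d$ and are discarded since $Z\in\cX_d^\bot$). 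The task is to show each such integral converges as $\ve\to+0$ in the operator sense dictated by the weighted norm $\cX_\al$, $\al<-7/2$, appearing in the arrow $\toYs$.

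Next I would use the coarea formula: since $\na\om_l\ne0$, introduce local coordinates $(\om',\zeta)$ with $\om'=\om_l(\theta)$, so that $d\theta=\dfrac{d\om'\,dS(\zeta)}{|\na\om_l(\theta)|}$, and the integral becomes $\int \dfrac{h_l(\om')\,d\om'}{\om'-\om\mp i\ve}$ with $h_l(\om')=\int_{\om_l(\theta)=\om'}\langle P_lD,P_lD\rangle/|\na\om_l|\,dS$. Because $\si\in\cS_+$ up to exponential decay and $D\in C_0^\infty$, the projections $P_l(\theta)$ and the surfaces are real‑analytic, hence $h_l$ is a compactly supported Hölder‑continuous (indeed piecewise‑analytic) function of $\om'$; the Sokhotski–Plemelj formula then gives the pointwise limit
\be
\langle Z, R_K(\om\pm i0)Z\rangle
=\sum_l\Big[\,\mathrm{p.v.}\!\int\fr{h_l(\om')\,d\om'}{\om'-\om}\;\pm\;i\pi h_l(\om)\Big],
\ee
and the same computation applied to $R_K(\om\pm i\ve)Z$ tested against an arbitrary $V$ from the same dense class yields weak convergence. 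To upgrade weak convergence to convergence in $\cX_\al$ I would bound $\Vert R_K(\om\pm i\ve)Z\Vert_\al$ uniformly in $\ve$ by the Hölder continuity of the densities $h_l$ (a standard Privalov/Plemelj bound, costing the weight: each momentum derivative of the symbol is paid for by one power of $\langle n\rangle$, and the three‑dimensional cell sum together with the one extra derivative needed for Hölder control forces $\al<-7/2$), and then invoke a density/uniform‑boundedness argument: the cell representation \eqref{FY2}–\eqref{FY24} shows $R_K(\om\pm i\ve)$ is an integral operator whose kernel depends continuously on $\ve\ge0$ after this weighting, so the limit \eqref{lap} holds.

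The main obstacle is controlling the weighted norm uniformly in $\ve$: the resolvent kernel in the cell representation is an oscillatory integral $\int_{\Pi^*}e^{-i(n-m)\theta}\cM(-\theta)(\om_l(\theta)-\om\mp i\ve)^{-1}P_l(\theta)\,d\theta$, and near the hypersurface $\om_l(\theta)=\om$ the denominator is only Hölder‑bounded after integration, so the decay of the kernel in $|n-m|$ is limited — this is precisely what dictates the loss of derivatives and the threshold $\al<-7/2$. I would handle it by a partition of $W$ into a piece where $|\om_l(\theta)-\om|\ge\de$ (there the kernel decays rapidly by non‑stationary phase, uniformly in $\ve$) and a shrinking collar around the hypersurface, on which one integrates out the normal variable explicitly using Plemelj and estimates the remaining surface integral by stationary phase on the $(\dim-1)$‑dimensional surface, as in Lemma \re{lom}(ii) and the measure‑zero degeneracy set \eqref{deg}; summing the resulting $\langle n-m\rangle^{-p}$ bounds over $\Z^3$ against the weight $\langle n\rangle^{2\al}$ converges exactly when $\al<-7/2$, and the bound being uniform in $\ve\in(0,1)$ closes the argument.
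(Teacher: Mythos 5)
Your stationary (resolvent-side) route via the coarea formula and Sokhotski--Plemelj is genuinely different from the paper's argument, and its ingredients (nonvanishing gradient from \eqref{Ck} on $\supp D$, exclusion of constant branches because $Z\in\cX_d^\bot$, Privalov-type bounds) are the right ones; but as written there are concrete gaps. First, you only establish convergence of matrix elements $\langle V,R_K(\om\pm i\ve)Z\rangle$ and then say a ``density/uniform-boundedness argument'' gives \eqref{lap}; weak convergence plus a uniform bound in $\cX_\al$ does \emph{not} yield convergence in the $\cX_\al$-norm, which is what $\toYs$ requires. To close this you need something quantitative cell-by-cell: e.g.\ an $\ve$-uniform bound $\Vert (R_K(\om\pm i\ve)Z)(n)\Vert_{\cX^0(\Pi)}\le C(1+|n|)^{2}$ (obtained from a Plemelj--Privalov estimate on the $n$-dependent surface densities, whose $\om'$-derivatives grow like $(1+|n|)$ per derivative) together with convergence of each cell component; then the tail of the weighted sum $\sum_n(1+|n|)^{2\al}\Vert\cdot\Vert^2$ is uniformly small exactly when $2\al+4<-3$, i.e.\ $\al<-7/2$, and the limit follows. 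Your own accounting of the $-7/2$ threshold is heuristic, and the appeal to stationary phase on the level surfaces and to the degeneracy set \eqref{deg} is both unjustified (no Hessian nondegeneracy is available from Lemma \re{lom}, only \eqref{Ck}) and unnecessary: since $Z$ is a fixed vector with smooth, compactly supported Bloch data, no kernel decay in $|n-m|$ is needed at all.

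The paper avoids all of this by working on the time side: for $Z$ of type \eqref{YV} the solution has the cell representation $Z(n,t)=|\Pi^*|^{-1}\int_W e^{-in\theta}\cM(-\theta)e^{-i\om_l(\theta)t}P_l(\theta)\ti Z(\theta)\,d\theta$, and two integrations by parts (using $\na\om_l\ne0$ on $\supp\ti Z$) give $\Vert Z(n,t)\Vert_{\cX^0(\Pi)}\le C(1+|n|)^2(1+|t|)^{-2}$, hence $\Vert Z(t)\Vert_{\cX_\al}\le C(1+|t|)^{-2}$ precisely for $\al<-7/2$; then $R_K(\om\pm i\ve)Z=\int_0^{\pm\infty}e^{(i\om\mp\ve)t}Z(t)\,dt$ converges in $\cX_\al$ as $\ve\to+0$ by dominated convergence. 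This is shorter, explains the threshold transparently, and sidesteps the Privalov machinery; if you want to keep your stationary approach, supply the uniform per-cell bound and the weak-to-norm upgrade described above, and drop the stationary-phase/\eqref{deg} detour.
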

\Pr
It suffices to prove
(\re{lap}) for every vector  of type   (\re{YV}).
By (\re{FY22})
the corresponding solution $Z(t)$ with $Z(0)=Z$ reads
\[
 Z(n,t)=|\Pi^*|^{-1}\int_W e^{-in\theta}
 \cM(-\theta)e^{-i\om_l(\theta)t}P_l(\theta)\ti Z(\theta)d\theta,\quad n\in\Z^3
\]
The partial integration shows   the time-decay
\[
 \Vert Z(n,t)\Vert_{\cX^0(\Pi)}\le C(1+|n|)^2(1+|t|)^{-2}.
\]
Hence,
\[
 \Vert Z(t)\Vert_{\cX_\al}\le C(1+|t|)^{-2}.
\]
Now the convergence (\re{lap}) follows from the integral representation
\[
\qquad\qquad
\qquad
\qquad
\qquad
R_K(\om\pm i\ve)Z=\int_0^{\pm \infty} e^{(i\om\mp\ve)t}Z(t)dt.
\qquad\qquad\qquad\qquad\qquad\qquad\qquad\qquad\qquad\qquad
\Box
\]



\appendix

\protect\renewcommand{\thesection}{\Alph{section}}
\protect\renewcommand{\theequation}{\thesection.\arabic{equation}}
\protect\renewcommand{\thesubsection}{\thesection.\arabic{subsection}}
\protect\renewcommand{\thetheorem}{\Alph{section}.\arabic{theorem}}

\setcounter{equation}{0}
\section{Matrix entries of the Bloch generators}
Let us recall some notations from \ci{KKpl2015}.
For $f\in C_0^\infty(\R^3)$ the Fourier transform is defined by
\be\la{Fu2}
f(x)=\fr 1{(2\pi)^3}\int_{\R^3} e^{-i\xi x}\ti f(\xi)d\xi,\qquad x\in \R^3;
\qquad
\ti f(\xi)=\int_{\R^3} e^{i\xi x}f(x)\,dx,\qquad \xi\in \R^3.
\ee
For the real ground state (\re{gri})
the generator $A$ of the linearized dynamics
 is given by  \eqref{JDi}, where
$ S$  denotes the operator with the `matrix'
$$
 S(x,n):=e\psi^0(x)G\na\si(x-n),\qquad x\in\R^3,\,\,n\in\Z^3
$$
by formula (3.3) of \ci{KKpl2015}
and  $T$ is the real matrix with entries
$$
T(n-n'):=-\ds\langle  G\na\otimes\na\si(x-n'),  \si(x-n) \rangle
+  \ds\langle\Phi^0,\na\otimes \na\si\rangle\de_{nn'}
$$
by formula (3.4) of \ci{KKpl2015}.
The operators
$G\psi^0: L^2(\R^3)\to L^2(\R^3)$ and $S:l^2:=l^2(\Z^3)\otimes \C^3\to L^2(\R^3)$
are not bounded due to the `infrared divergence' at $\xi=0$.
On the other hand, the
operator $T$  is  bounded
in $l^2(\Z^3)\otimes \C^3$
by Lemma 3.1 of \ci{KKpl2015}.
In the Bloch representation all these operators are given by
\beqn\la{tiHS}
\ti S(\theta)=
e\psi^0\ti G(\theta)(\na-i\theta)\ti\sigma(\theta,\cdot),&& \ti G(\theta)=(i\na+\theta)^{-2},\\
\nonumber\\
\ti H^0(\theta)=\fr 12(i\na+\theta)^2-e\Phi^0-\om^0,&& \ti T(\theta)=\ti T_1(\theta)+\ti T_2+\cO(e^4)\,\,\mbox{as}\,\,e\to 0,    \la{tiH0}\\
\nonumber\\
\la{tiH1}
\ti T_1(\theta)=\sum_m\Big[  \fr{\xi\otimes\xi}{|\xi|^2}|\ti\si(\xi)|^2    \Big]_{\xi=2\pi m-\theta},&&
\ti T_2=-\sum_{m\ne 0}\Big[  \fr{\xi\otimes\xi}{|\xi|^2}|\ti\si(\xi)|^2    \Big]_{\xi=2\pi m}.
\eeqn
in accordance with the formulas (6.22)--(6.24), and (10.4), (10.12) of \ci{KKpl2015}.

\begin{remarks}\la{rb} \rm
i) $\ti T_2=0$ under the Jellium condition (\re{Wai}).
\medskip\\
ii) The operators $\ti G(\theta):L^2(\T)\to H^2(\T)$ are bounded for
$\theta\in \Pi^*\setminus\Ga^*$; however
$\Vert \ti G(\theta)\Vert \sim d^{-2}(\theta)$, where $d(\theta):=\dist(\theta,\Ga^*)$.

\end{remarks}







\end{document}